\newcommand{\diag}{\operatorname{diag}}
\newcommand{\assgn}{\ensuremath\mathrel{\mathop:}=}
\newcommand{\hpm}{\hphantom{-}}
\newcommand{\hpz}{\hphantom{0}}
\def\widebreve#1{\smash{\hbox{\Large$\skew2\breve{\hbox{\normalsize$#1$}}$}}\vphantom{#1}}
\numberwithin{equation}{section}
\journal{Linear Algebra Appl.}
\newtheorem{theorem}{Theorem}[section]
\newtheorem{lemma}[theorem]{Lemma}
\newtheorem{proposition}[theorem]{Proposition}
\newtheorem{corollary}[theorem]{Corollary}
\newtheorem{remark}[theorem]{Remark}
\newtheorem{example}[theorem]{Example}
\begin{document}
\begin{frontmatter}

\title{Nearly optimal scaling in the SR decomposition\tnoteref{label1}}
\tnotetext[label1]{Sanja Singer has been fully supported by Croatian
  Science Foundation under the project IP-2014-09-3670.  Miroslav
  Rozlo\v{z}n\'{\i}k has been supported by the Czech Science
  Foundation grant 20-01074S in the framework of RVO 67985840.}

\author[label2]{Heike Fa\ss bender}
\ead{h.fassbender@tu-braunschweig.de}
\address[label2]{Institute for Numerical Analysis,
  Technische Universit\"{a}t Braunschweig, Universit\"{a}tsplatz 2,
  38106 Braunschweig, Germany}

\author[label3]{Miroslav Rozlo\v{z}n\'{\i}k\corref{cor1}}
\cortext[cor1]{Corresponding author.}
\ead{miro@math.cas.cz}
\address[label3]{Institute of Mathematics, Czech Academy of Sciences,
  \v{Z}itn\'{a} 25, CZ-115 67 Prague 1, Czech Republic}

\author[label4]{Sanja Singer}
\ead{ssinger@fsb.hr}
\address[label4]{University of Zagreb,
  Faculty of Mechanical Engineering and Naval Architecture,
  Ivana Lu\v{c}i\'{c}a 5, 10000 Zagreb, Croatia}

\begin{abstract}
  In this paper we analyze the nearly optimal block diagonal scalings of the
  rows of one factor and the columns of the other factor in the
  triangular form of the SR decomposition.  The result is a block
  generalization of the result of the van der Sluis about the almost
  optimal diagonal scalings of the general rectangular matrices.
\end{abstract}

\begin{keyword}
  SR decomposition \sep scaling \sep condition number

  \MSC 65F25 \sep 65F35 \sep 65F05
\end{keyword}

\end{frontmatter}

%
%
\section{Introduction}
\label{sec:1}
%
%

The QR factorization and the closely related QR algorithm are one of
the workhorses in solving general eigenvalue problems.  It is
well-known that the QR algorithm preserves the symmetric structure of
the matrix whose eigenvalues are to be computed such that the computed
eigenvalues will all be real (even so rounding errors are
unavoidable).  Unfortunately, there are a number of structured problems
whose structure is not preserved by the QR algorithm.  Thus, general
QR-like methods, in which the QR factorizations are replaced by other
factorizations have been studied by several authors, see, e.g.,
\cite{Watkins-Elsner-91}.  Here we consider the SR decomposition
which can be used in the SR algorithm which preserves the symplectic
as well as the Hamiltonian structure.

For a matrix $G \in \mathbb{R}^{2m, 2m}$ an SR decomposition
is given by
\begin{equation}
  G = \widetilde{S} \widetilde{R}
  = \widetilde{S} \begin{pmatrix}
    \widetilde{R}_{11} & \widetilde{R}_{12} \\
    \widetilde{R}_{21} & \widetilde{R}_{22}
  \end{pmatrix},
  \label{1.1}
\end{equation}
where $\widetilde{S}$ is symplectic, i.e.,
$\widetilde{S}^T J \widetilde{S} = J$
for the skew-symmetric matrix $J$ defined as
\begin{displaymath}
  J = \begin{pmatrix}
    \hpm 0 & I \\
    - I & 0
  \end{pmatrix}
  \in \mathbb{R}^{2m, 2m}.
\end{displaymath}
As usual, $I \in \mathbb{R}^{m, m}$ denotes the identity matrix.
The matrix $\widetilde{R}$ is $J$-triangular, that is, $R_{ij}$ are
upper triangular, and $R_{21}$ has zero diagonal.  The SR
decomposition \eqref{1.1} exists if all leading submatrices of even
dimension of $P G^T J G P^T$ are nonsingular (see, e.g.,
\cite[Theorem 11]{Elsner-79} or~\cite[Theorem 3.8]{BunseGerstner-86}),
and $P$ is the (perfect shuffle) permutation matrix
\begin{displaymath}
  P = (e_1, e_3, \ldots, e_{2m-1}, e_2, e_4, \ldots, e_{2m}),
\end{displaymath}
where $e_k$, $k = 1, \ldots, m$ are vectors of the canonical basis.
The set of $2m \times 2m$ SR decomposable matrices is thus dense in
$\mathbb{R}^{2m, 2m}$.

The SR decomposition is not unique as with $G = \widetilde{S} \widetilde{R}$
also $G = \undertilde{S} \undertilde{R}$ is an SR decomposition of $G$
where $\undertilde{S} = \widetilde{S} \widetilde{D}^{-1}$ and
$\undertilde{R} = \widetilde{D} \widetilde{R}$ for a matrix
\begin{equation}
  \widetilde{D} = \begin{pmatrix}
    C & F\hphantom{{}^{-1}} \\
    0 & C^{-1}
  \end{pmatrix},
  \label{1.2}
\end{equation}
with diagonal matrices $C, F \in \mathbb{R}^{m, m}$.  If uniqueness is
required, there are various possibilities how to make it unique by
adding requirements on $S$ or $\widetilde{R}$ (see, e.g.,
\cite{Fassbender-Razloznik-2016} for a summary of the typical
suggestions).

Symplectic matrices may be arbitrarily ill-conditioned.  Thus, one is
interested in making use of the non-uniqueness of the SR decomposition
by choosing $\widetilde{S}$ (or $\widetilde{R}$) factor so that its
condition is as good as possible.  Some first-order componentwise and
normwise perturbation bounds for a certain unique SR decomposition
($\diag (R_{11}) = |\diag (R_{22})|$, $\diag (R_{21}) = 0$) can be
found in~\cite{Chang-98} (see also~\cite{Demmel-Kagstroem-87}, while
in~\cite{Fassbender-Razloznik-2016} it is discussed how to choose the
entries of the $2 \times 2$ submatrices
\begin{displaymath}
  \begin{pmatrix}
    (\widetilde{R}_{11})_{jj} & (\widetilde{R}_{12})_{jj} \\
    0 & (\widetilde{R}_{22})_{jj}
  \end{pmatrix}
\end{displaymath}
of the $J$-triangular matrix $\widetilde{R}$ in order to  minimize the
condition number of $\widetilde{R}$ or the condition number of
$\widetilde{S}$.

Assume that $G = \widetilde{S} \widetilde{R}$ is a SR decomposition of
$G$.  We will consider the question on how to choose the matrix
$\widetilde{D}$ as in \eqref{1.2} such that the SR decomposition
\begin{displaymath}
  G = \undertilde{S} \undertilde{R},\quad
  \undertilde{S} = \widetilde{S} \widetilde{D}^{-1}, \quad
  \undertilde{R} = \widetilde{D} \widetilde{R}
\end{displaymath}
of $G$ has either an nearly optimally conditioned $\undertilde{S}$ or
an nearly optimally conditioned $\undertilde{R}$.  In particular, we
try to answer the questions on how to choose $\widetilde{D}_r$ and
$\widetilde{D}_c$ such that
\begin{equation}
  \kappa_2 (\widetilde{D}_r \widetilde{R})
  \leq \alpha_R \min_{\widetilde{D} \in \widetilde{\mathcal{D}}} (\widetilde{D} \widetilde{R})
  \label{1.3}
\end{equation}
and
\begin{equation}
  \kappa_2 (\widetilde{S} (\widetilde{D}_c)^{-1})
  \leq \alpha_C \min_{\widetilde{D} \in \widetilde{\mathcal{D}}} (\widetilde{S} \widetilde{D}^{-1}),
  \label{1.4}
\end{equation}
where $\widetilde{\mathcal{D}}$ denotes the set of all nonsingular
$2m \times 2m$ matrices of the form \eqref{1.2}, and
$\alpha_R, \alpha_C \in \mathbb{R}$.

It is well-known that equilibration tends to reduce the condition
number of a matrix.  Equilibration means the scaling of the rows
(and/or columns) of a matrix such that the norms of all rows (and/or
columns) obtain equal norms.  This has already been studied by van der
Sluis in~\cite{vanderSluis-69} (see also~\cite{HighamN-2002}). If
$G \in \mathbb{R}^{m, n}$ is a full rank matrix, than
\begin{gather*}
  \kappa_2 (\Sigma_r G) \leq \sqrt{m} \min_{\Sigma \in \mathcal{S}_c} (\Sigma G) \\
\noalign{\noindent for}
  \Sigma_r^{} = \diag(\| G e_1^{} \|_2^{-1}, \ldots, \| G e_n^{} \|_2^{-1})
\end{gather*}
and
\begin{gather*}
  \kappa_2 (G \Sigma_c^{}) \leq \sqrt{n} \min_{\Sigma \in \mathcal{S}_r} (G \Sigma) \\
\noalign{\noindent{for}}
  \Sigma_c^{} = \diag(\| e_1^T G \|_2^{-1}, \ldots, \| e_m^T G \|_2^{-1}),
\end{gather*}
where $\mathcal{S}_k$ denotes the set of all nonsingular $k \times k$
diagonal matrices and $e_k$ the $k$th column of the identity matrix.
In this paper we will generalize these results.

To be precise, we will consider not just the scaling of the SR
decomposition of square matrices $G \in \mathbb{R}^{2m, 2m}$, but we
will allow for rectangular $G \in \mathbb{R}^{2m,2n}$ where $m \geq n$.
Its standard SR decomposition is given by
\begin{displaymath}
  G = \widetilde{S} \widetilde{R}
  = \widetilde{S} \left( \begin{array}{c|c}
    \widetilde{R}_{11} & \widetilde{R}_{12} \\
    0_{m-n} & 0_{m-n} \\
    \hline
    \widetilde{R}_{21} & \widetilde{R}_{22} \\
    0_{m-n} & 0_{m-n}
  \end{array} \right)
\end{displaymath}
where $\widetilde{S} \in \mathbb{R}^{2m, 2m}$ is symplectic,
$\widetilde{R}_{11}, \widetilde{R}_{12}, \widetilde{R}_{22} \in \mathbb{R}^{n, n}$
are upper triangular, $\widetilde{R}_{21} \in \mathbb{R}^{n, n}$ is
upper triangular with zero diagonal and $0_{m-n} \in \mathbb{R}^{m - n, n}$
denotes a zero matrix.

The rest of the paper is organized as follows.  In Section~\ref{sec:2}
some preliminary observations are given which will be helpful for the
later discussion.  In Section~\ref{sec:3} we find the almost optimal
block-diagonal scaling from the left-hand side of the triangular
factor $R$ in the SR decomposition.  Section~\ref{sec:4} contains
similar results for the right-hand block-diagonal scalings of the
symplectic factor $S$.  In Section \ref{sec:5} some connections to
other types of factorizations are given.  In particular, the
symplectic QR factorization~\cite{SingerSanja-SingerSasa-2003} and the
Cholesky-like factorization of skew-symmetric matrices presented
in~\cite{Bunch-82} (see
also~\cite{Benner-Byers-Fassbender-Mehrmann-Watkins-2000}) are
considered. The results obtained in Sections \ref{sec:3} and
\ref{sec:4} apply immediately.  In the final section the theoretical
results are illustrated on four examples -- two for column scalings of
the triangular factor $R$ and two for the scalings of the factor
permuted symplectic factor $S$, respectively.

%
%
\section{Preliminary lemmata}
\label{sec:2}
%
%

Before we tackle these two problems in the next sections, we will
derive two helpful lemmata.  The first lemma is a straightforward
consequence of the Leibniz formula for the determinant of a $2 \times 2$
matrix.
\begin{lemma}
  For all matrices $B \assgn (B_1, B_2)$, $B_1, B_2 \in \mathbb{R}^{m}$
  it holds
  \begin{displaymath}
    \det(B_{}^T B) = \| B_1^{} \|_2^2 \| B_2^{} \|_2^2 - (B_1^T B_2^{})^2.
  \end{displaymath}
  \label{lm2.1}
\end{lemma}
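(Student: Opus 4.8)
The plan is to form the $2\times 2$ Gram matrix $B^T B$ explicitly and then read off its determinant. Since $B = (B_1, B_2)$ has the two vectors $B_1, B_2 \in \mathbb{R}^m$ as its columns, the $(i,j)$ entry of $B^T B$ is the inner product $B_i^T B_j$. First I would write
\[
  B^T B = \begin{pmatrix}
    B_1^T B_1 & B_1^T B_2 \\
    B_2^T B_1 & B_2^T B_2
  \end{pmatrix}
  = \begin{pmatrix}
    \| B_1 \|_2^2 & B_1^T B_2 \\
    B_1^T B_2 & \| B_2 \|_2^2
  \end{pmatrix},
\]
using $B_i^T B_i = \| B_i \|_2^2$ and the symmetry $B_2^T B_1 = B_1^T B_2$ (both are scalars equal to the same inner product).

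Next I would apply the Leibniz formula for the determinant of a $2\times 2$ matrix, namely $\det \bigl(\begin{smallmatrix} a & b \\ c & d \end{smallmatrix}\bigr) = ad - bc$, with $a = \| B_1 \|_2^2$, $d = \| B_2 \|_2^2$, and $b = c = B_1^T B_2$. This yields directly
\[
  \det(B^T B) = \| B_1 \|_2^2 \, \| B_2 \|_2^2 - (B_1^T B_2)^2,
\]
which is the claimed identity. There is no genuine obstacle here: the only point requiring a moment of care is the correct identification of the off-diagonal entries as the common scalar $B_1^T B_2$, after which the result is immediate. The statement is essentially a restatement of the $2\times 2$ determinant formula specialized to a Gram matrix, and I expect the author's proof to be equally short.
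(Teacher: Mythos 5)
Your proof is correct and matches the paper's approach exactly: the paper gives no written proof, stating only that the lemma is ``a straightforward consequence of the Leibniz formula for the determinant of a $2\times 2$ matrix,'' which is precisely the computation you carry out with the Gram matrix $B^TB$.
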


Next we will proof a formulae for the condition number of a $2 \times 2$
matrix.  For this, we make use of the following well-known facts
(see, e.g., \cite{Golub-VanLoan-96}) for $A, B \in \mathbb{R}^{n, n}$
and the singular value decomposition
$B = U \Sigma V^T$ with $U^T U = V^T V = I$,
$\Sigma = \diag(\sigma_1(B), \ldots, \sigma_n(B))$:
\begin{gather*}
  \det(A B) = \det(A) \det(B), \qquad
  \det(B^T) = \det(B), \\
  \det(B) = \prod_{k = 1}^n \sigma_k(B), \qquad
  \|B\|_F = \sum_{k = 1}^n \sigma_k^2(B), \qquad
  \|B\|_2 = \sigma_{\max}(B).
\end{gather*}

\begin{lemma}
  For any matrix $B \in \mathbb{R}^{2, 2}$ its spectral condition
  number in terms of its determinant and and Frobenius norm can be
  written as
  \begin{displaymath}
    \kappa_2 (B) = \frac{\sigma_{\max}(B)}{\sigma_{\min}(B)}
    = \frac{ \| B \|_F^2 + \sqrt{\| B \|_F^4 - 4 \det^2 (B)}}{2 |\det(B)|}
  \end{displaymath}
  where $\sigma_{\max}(B)$ and $\sigma_{\min}(B)$ are the maximal and
  minimal singular values of $B$.
  \label{lm2.2}
\end{lemma}

\begin{proof}
  For $B \in \mathbb{R}^{2, 2}$ we have
  \begin{equation}
    \| B \|_F^2 = \sigma_{\max}^2(B) + \sigma_{\min}^2(B)
    \label{2.1}
  \end{equation}
  and
  \begin{equation}
    \det(B^T B) = {\det}^2(B) = \sigma_{\max}^2(B) \cdot \sigma_{\min}^2(B).
    \label{2.2}
  \end{equation}
  Note that \eqref{2.1} and \eqref{2.2} are Vieta's formulas for
  the sum and the product of the roots $\sigma_{\max}^2(B)$ and
  $\sigma_{\min}^2(B)$ of the quadratic equation
  \begin{displaymath}
    \big( \tau - \sigma_{\max}^2(B) \big) \big(\tau - \sigma_{\min}^2(B)\big)
    = \tau^2 - \| B \|_F^2 \tau + {\det}^2(B) = 0.
  \end{displaymath}
  Therefore, squares of the singular values can be written by using the
  coefficients of the polynomial,
  \begin{align*}
    \sigma_{\max}^2 (B) & = \frac{\| B \|_F^2 + \sqrt{\| B \|_F^4 - 4 \det^2(B)}}{2}, \\
    \sigma_{\min}^2 (B) & = \frac{\| B \|_F^2 - \sqrt{\| B \|_F^4 - 4 \det^2(B)}}{2}.
  \end{align*}
 Hence, the spectral condition number of $B$ can be expressed as
  \begin{displaymath}
    \kappa_2 (B) = \frac{\sigma_{\max}(B)}{\sigma_{\min}(B)}
    = \frac{\sigma^2_{\max}(B)}{|\det(B)|}
    = \frac{\| B \|_F^2 + \sqrt{\| B \|_F^4 - 4 \det^2(B)}}{2|\det(B)|}.
    \qedhere
  \end{displaymath}
\end{proof}

%
%
\section{Nearly optimal block-row scaling of $\widetilde{R}$}
\label{sec:3}
%
%

Now we are ready to consider the problem \eqref{1.3}.  It is easy to
see that for a $J$-triangular matrix $\widetilde{R} \in \mathbb{R}^{2n, 2n}$
the permuted matrix $\widetilde{P} \widetilde{R} \widetilde{P}^T$ is
an upper triangular matrix.  Similarly, a matrix
$\widetilde{D} \in \mathbb{R}^{2n, 2n}$ of the form \eqref{1.2} is
permuted to the block diagonal matrix
\begin{equation}
  D = \widetilde{P} \widetilde{D} \widetilde{P}^T
  = \diag \left(
  \begin{pmatrix}
    c_{11}^{} & f_{11}^{} \\
    0       & c_{11}^{-1}
  \end{pmatrix}, \ldots,
  \begin{pmatrix}
    c_{nn}^{} & f_{nn}^{} \\
    0       & c_{nn}^{-1}
  \end{pmatrix} \right) \in \mathbb{R}^{2n, 2n}.
  \label{3.1}
\end{equation}
As
\begin{displaymath}
  DR = (\widetilde{P} \widetilde{D} \widetilde{P}^T)
  (\widetilde{P} \widetilde{R} \widetilde{P}^T)
  = \widetilde{P} \widetilde{D} \widetilde{R} \widetilde{P}^T
\end{displaymath}
and as the spectral norm is unitary invariant, we have
$\kappa_2(\widetilde{D} \widetilde{R}) = \kappa_2(DR)$.  Thus, instead
of \eqref{1.3} we will actually consider the following equivalent
problem.  Given an upper triangular matrix $R \in \mathbb{R}^{2n, 2n}$
find a matrix $D_r$ such that
\begin{equation}
  \kappa_2(D_r R) \leq \alpha_R \min_{D \in \mathcal{D}} (D R)
  \label{3.2}
\end{equation}
where $\mathcal{D}$ denotes the set of all nonsingular $2n \times 2n$
matrices of the form \eqref{3.1} and $\alpha_R \in \mathbb{R}$.

As any $D \in \mathcal{D}$ is a block diagonal matrix with $2 \times 2$
blocks on the diagonal, we will block $R$ accordingly
\begin{equation}
  R = \begin{pmatrix}
    R_{11} & \cdots & R_{1n} \\
    0 & \ddots & \vdots \\
    0 & 0 & R_{nn}
  \end{pmatrix},
\label{3.3}
\end{equation}
with $R_{ij} \in \mathbb{R}^{2, 2}$ for $i = 1, \ldots, j$,
$j = 1, \ldots, n$ and diagonal blocks
\begin{displaymath}
  R_{jj} = \begin{pmatrix}
    r_{11}^{(j)} & r_{12}^{(j)} \\
    0          & r_{22}^{(j)}
  \end{pmatrix}, \qquad r_{11}^{(j)} r_{22}^{(j)} \neq 0
\end{displaymath}
for $j = 1, \ldots, n$.  Thus, we will consider
\begin{displaymath}
  X = D_r R = \diag(D_1, \ldots, D_n) R,
\end{displaymath}
where $j$th block-row of the matrix $X$ is
\begin{equation}
  X_j= D_j \cdot \begin{pmatrix}
    0_2 & \cdots & 0_2 & R_{jj} & \cdots & R_{jn}
    \end{pmatrix} \in \mathbb{R}^{2, 2n},
  \label{3.4}
\end{equation}
and
\begin{displaymath}
  D_j = \begin{pmatrix}
    c_{jj}^{} & f_{jj}^{} \\
    0       & c_{jj}^{-1}
  \end{pmatrix},
\end{displaymath}
for $j = 1, \ldots, n$.

Let $L$ be
\begin{equation}
  L = R^T = (L_1, \ldots, L_n), \qquad L_j \in \mathbb{R}^{2n, 2}
  \label{3.5}
\end{equation}
such that $L_j^T$ denotes the $j$th block row of the matrix $R$.
Denote the two columns of $L_j$ by $L_{j1}$ and $L_{j2}$,
respectively,
\begin{displaymath}
  L_j = (L_{j1}, L_{j2}), \qquad L_{j1}, L_{j2} \in \mathbb{R}^{2n}.
\end{displaymath}

We will tackle our problem in three steps.  First we will see that it
is possible to choose $D_j$ such that $D_j$ minimizes the Frobenius
norm of $X_j$ and the two rows of $X_j$ have the same Frobenius norm
$\beta_j$.  Next we will discuss how to choose $D_r$ such that all row
of $X$ have the same Frobenius norm $\beta \geq \beta_j$.  Finally, we
will give an answer for \eqref{3.2}.

Thus, we start our discussion by first seeing what can be achieved
locally by looking at the $j$th block row of $X$.  We are looking for
$D_j$ that minimizes the Frobenius norm of $X_j$.

The Frobenius norm of $X_j$ can now be expressed as
\begin{align}
 \| X_j^{} \|_F^2 & = \| D_j^{}  L_j^T \|_F^2 = \| L_j^{} D_j^T \|_F^2
  = \left \| \begin{pmatrix}
    c_{jj}^{} L_{j1}^{} + f_{jj}^{} L_{j2}^{}, & c_{jj}^{-1} L_{j2}^{}
    \end{pmatrix} \right\|_F^2 \nonumber \\
  & = \| c_{jj}^{} L_{j1}^{} + f_{jj}^{} L_{j2}^{} \|_2^2
  + \| c_{jj}^{-1} L_{j2}^{} \|_2^2
    \label{3.6} \\
  & = c_{jj}^2 \| L_{j1}^{} \|_2^2 + 2 c_{jj}^{} f_{jj}^{} L_{j1}^T L_{j2}^{}
    + f_{jj}^2 \| L_{j2}^{} \|_2^2 + \frac{\| L_{j2}^{} \|_2^2 }{c_{jj}^2}.
  \label{3.7}
\end{align}
With this we are ready to state an optimal scaling $D_j$ for the
$j$th block row of $R$.
\begin{theorem}
  Let $R \in \mathbb{R}^{2n, 2n}$ as in \eqref{3.3} be given.  Let
  $L = R^T$ as in \eqref{3.5} and $X_j^{} = D_j^{} L_j^T$ as in
  \eqref{3.4}.  The Frobenius norm of $X_j$,
  $\| X_j^{} \|_F^{} = \| X_j^T \|_F^{} = \| L_j^{} D_j^T \|_F^{}$ is
  minimized for
  \begin{equation}
    \widehat{D}_j = \begin{pmatrix}
      \hat{c}_{jj}^{} & \hat{f}_{jj}^{} \\
      0             & \hat{c}_{jj}^{-1}
    \end{pmatrix},
    \label{3.8}
  \end{equation}
  where
  \begin{align}
    \hat{c}_{jj}^{} & = \frac{\| L_{j2}^{} \|_2}{\sqrt[4]{\det(L_j^T L_j^{})}},
    \label{3.9} \\
    \hat{f}_{jj}^{} & = -\frac{ L_{j1}^T L_{j2}^{}}{\| L_{j2}^{} \|_2 \sqrt[4]{\det(L_j^T L_j^{})}}.
    \label{3.10}
  \end{align}
  Thus, for the Frobenius norm of the $j$th block row of $R$ for the
  optimal $\widehat{D}_j$ it holds
  \begin{displaymath}
    \| X_j^{} \|_F = \| L_j^{} \widehat{D}_j^T \|_F^{} = \sqrt{2} \beta_j^{}
  \end{displaymath}
  with
  \begin{equation}
    \beta_j^{} \assgn \sqrt[4]{\det(L_j^T L_j^{})}.
    \label{3.11}
  \end{equation}
  \label{tm3.1}
\end{theorem}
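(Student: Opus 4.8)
The plan is to minimize the quadratic expression for $\|X_j^{}\|_F^2$ recorded in \eqref{3.7} over the two free parameters $c_{jj}^{}$ and $f_{jj}^{}$, carried out as a two-stage reduction: first as a quadratic in $f_{jj}^{}$ for a fixed $c_{jj}^{}$, then as a one-variable problem in $t \assgn c_{jj}^2$. To lighten the notation I abbreviate $a \assgn \|L_{j1}^{}\|_2^2$, $b \assgn L_{j1}^T L_{j2}^{}$ and $d \assgn \|L_{j2}^{}\|_2^2$, so that \eqref{3.7} reads $\|X_j^{}\|_F^2 = a\,c_{jj}^2 + 2b\,c_{jj}^{} f_{jj}^{} + d\,f_{jj}^2 + d\,c_{jj}^{-2}$.

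First I would fix $c_{jj}^{} \neq 0$ and treat the right-hand side as a quadratic in $f_{jj}^{}$ with positive leading coefficient $d$. Completing the square (equivalently, setting the partial derivative in $f_{jj}^{}$ to zero) yields the unique minimizer $\hat f_{jj}^{} = -b\,c_{jj}^{}/d$, and substituting it back leaves
\[
  \|X_j^{}\|_F^2 = \Bigl(a - \frac{b^2}{d}\Bigr) c_{jj}^2 + \frac{d}{c_{jj}^2}.
\]
The decisive simplification here is Lemma~\ref{lm2.1}, applied to $B = L_j^{} = (L_{j1}^{}, L_{j2}^{})$: it gives $a d - b^2 = \|L_{j1}^{}\|_2^2 \|L_{j2}^{}\|_2^2 - (L_{j1}^T L_{j2}^{})^2 = \det(L_j^T L_j^{})$, so the leading coefficient becomes $a - b^2/d = \det(L_j^T L_j^{})/d$.

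Second I would minimize the surviving expression $\psi(t) = \bigl(\det(L_j^T L_j^{})/d\bigr)\,t + d/t$ over $t = c_{jj}^2 > 0$. Since both coefficients are positive, the AM--GM inequality shows the minimum is attained when the two summands coincide, i.e.\ at $t = d/\sqrt{\det(L_j^T L_j^{})}$, with minimal value $2\sqrt{\det(L_j^T L_j^{})}$. Taking the positive square root reproduces $\hat c_{jj}^{} = \|L_{j2}^{}\|_2/\sqrt[4]{\det(L_j^T L_j^{})}$ as in \eqref{3.9}, and then $\hat f_{jj}^{} = -b\,\hat c_{jj}^{}/d$ gives \eqref{3.10}. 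Finally $\|X_j^{}\|_F^2 = 2\sqrt{\det(L_j^T L_j^{})} = 2\beta_j^2$ with $\beta_j^{}$ as in \eqref{3.11}, whence $\|X_j^{}\|_F^{} = \sqrt{2}\,\beta_j^{}$.

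The algebra is routine, and I do not expect a genuine obstacle; the one point deserving care is the implicit hypothesis $\det(L_j^T L_j^{}) > 0$, which is what makes the fourth roots in \eqref{3.9}--\eqref{3.11} meaningful and guarantees that the minimizer $\hat c_{jj}^{}$ is nonzero, hence that $\widehat D_j$ is an admissible (invertible) scaling of the form \eqref{3.8}. This positivity holds because the $j$th block row $L_j^T = (0_2, \ldots, 0_2, R_{jj}^{}, \ldots, R_{jn}^{})$ contains the diagonal block $R_{jj}^{}$ with $r_{11}^{(j)} r_{22}^{(j)} \neq 0$; thus $R_{jj}^{}$, and therefore $L_j^{}$, has column rank $2$, making $L_j^T L_j^{}$ positive definite.
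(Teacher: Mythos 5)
Your proposal is correct. It minimizes the same explicit quadratic expression \eqref{3.7} and leans on the same key identity (Lemma~\ref{lm2.1} turning $\|L_{j1}\|_2^2\|L_{j2}\|_2^2-(L_{j1}^TL_{j2})^2$ into $\det(L_j^TL_j)$), and it arrives at the identical stationary point, so the computational core matches the paper's. Where you genuinely differ is in how global optimality is certified: the paper sets both partial derivatives to zero simultaneously and then verifies that the Hessian at the critical point is positive definite, whereas you perform a nested minimization --- first exactly in $f_{jj}$ for fixed $c_{jj}$ (a one-variable quadratic with positive leading coefficient $\|L_{j2}\|_2^2$), then in $t=c_{jj}^2>0$ via AM--GM. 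Your route is more elementary and self-certifying, since each one-dimensional step visibly attains a global minimum and no second-order test is needed; it also makes explicit the hypothesis $\det(L_j^TL_j)>0$ (via the nonsingularity of $R_{jj}$), which the paper uses implicitly to make the fourth roots in \eqref{3.9}--\eqref{3.11} meaningful. The paper's Lagrange-style treatment, on the other hand, produces the closed-form critical equations \eqref{3.9}--\eqref{3.10} directly as a system, which is the template reused verbatim for Theorem~\ref{tm3.6} and Theorem~\ref{tm4.2}.
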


\begin{proof}
  The partial derivatives of $\| X_j \|_F$ with respect to $c_{jj}$
  and $f_{jj}$ need to be equal to zero.  Differentiating \eqref{3.7}
  gives
  \begin{align*}
    0 & = c_{jj}^{} \| L_{j1}^{} \|_2^2 +  f_{jj}^{} L_{j1}^T L_{j2}^{}
    - \frac{\| L_{j2}^{} \|_2^2}{c_{jj}^3}, \\
    0 & = c_{jj}^{}  L_{j1}^T L_{j2}^{} + f_{jj}^{} \| L_{j2}^{} \|_2^2 .
  \end{align*}
  Rewriting the second equation as
  \begin{displaymath}
    f_{jj}^{} = -\frac{c_{jj}^{} L_{j1}^T L_{j2}^{}}{\| L_{j2}^{} \|_2^2}
  \end{displaymath}
  and substituting this expression into the first equation yields
  \begin{displaymath}
    0 = c_{jj}^4 \left( \| L_{j1}^{} \|_2^2
    - \frac{(L_{j1}^T L_{j2}^{})^2}{\| L_{j2}^{} \|_2^2} \right)
    - \| L_{j2}^{} \|_2^2,
  \end{displaymath}
  that is,
  \begin{displaymath}
    c_{jj}^4 = \frac{\| L_{j2}^{} \|_2^4}{(\| L_{j1}^{} \|_2^2 \| L_{j2}^{} \|_2^2
      - L_{j1}^T L_{j2}^{})_{}^2}.
  \end{displaymath}
  With Lemma \ref{lm2.1} we obtain \eqref{3.9}, and therefore
  \eqref{3.10}.  As the Hessian matrix
  \begin{displaymath}
    \begin{pmatrix}
      \| L_{j1}^{} \|_2^2 + 4 \det (L_j^T L_j^{}) &  L_{j1}^T L_{j2}^{} \\
      L_{j1}^T L_{j2}^{} & \| L_{j2}^{} \|_2^2
    \end{pmatrix}
  \end{displaymath}
  is symmetric positive definite (its trace and its determinants are
  positive), $\hat{c}_{jj}$ and $\hat{f}_{jj}$ as in \eqref{3.9} and
  \eqref{3.10} give the global minimum of
  $\min_{c_{jj}, f_{jj}} \|L_j^{} D_j^T \|_F$.

  By substituting the optimal $\hat{c}_{jj}$ and $\hat{f}_{jj}$ into
  \eqref{3.7} we obtain with Lemma \ref{lm2.1}
  \begin{align*}
    \| D_j^{} L_j^T \|_F^2
    & = \frac{\| L_{j2}^{} \|_2^2 \| L_{j1}^{} \|_2^2}{\sqrt{\det (L_j^T L_j^{})}}
      - 2 \frac{(L_{j1}^T L_{j2}^{})_{}^2}{\sqrt{\det (L_j^T L_j^{})}}
      + \frac{(L_{j1}^T L_{j2}^{})_{}^2}{\sqrt{\det (L_j^T L_j^{})}}
      + {\sqrt{\det (L_j^T L_j^{})}} \\
    & = \left( \frac{ \| L_{j2}^{} \|_2^2 \| L_{j1}^{} \|_2^2
      - (L_{j1}^T L_{j2}^{})_{}^2}{\sqrt{\det (L_j^T L_j^{})}}
      + {\sqrt{\det (L_j^T L_j^{})}} \right) \\
    & = \left( \frac{\det (L_j^T L_j^{})}{\sqrt{\det (L_j^T L_j^{})}}
      + {\sqrt{\det (L_j^T L_j^{})}} \right)
      = 2 \sqrt{\det (L_j^T L_j^{})}
      = 2 \beta_j^2.
    \qedhere
  \end{align*}
\end{proof}
It also holds that the two rows of $X_j$ have the same norm.
\begin{corollary}
  It holds that
  \begin{displaymath}
    \| e_1^T X_j^{} \|_2^{} = \| e_2^T X_j^{} \|_2^{} = \beta_j^{}.
  \end{displaymath}
  \label{cor3.2}
\end{corollary}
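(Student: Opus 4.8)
The plan is to read off the two rows of $X_j$ directly from the factored form $X_j = \widehat{D}_j L_j^T$ and then evaluate each norm at the optimal $\hat{c}_{jj}, \hat{f}_{jj}$ from \eqref{3.9}--\eqref{3.10}. Since $L_j^T$ has rows $L_{j1}^T$ and $L_{j2}^T$, multiplying by $\widehat{D}_j$ gives
\begin{displaymath}
  e_1^T X_j^{} = \hat{c}_{jj}^{} L_{j1}^T + \hat{f}_{jj}^{} L_{j2}^T,
  \qquad
  e_2^T X_j^{} = \hat{c}_{jj}^{-1} L_{j2}^T,
\end{displaymath}
where $e_1, e_2 \in \mathbb{R}^2$. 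The second row is the easy one: because $\det(L_j^T L_j^{}) > 0$ (the diagonal blocks of $R$ are nonsingular, so $L_j$ has full column rank and $\| L_{j2}^{} \|_2 > 0$), the scalar $\hat{c}_{jj}$ is positive, and \eqref{3.9} yields at once
\begin{displaymath}
  \| e_2^T X_j^{} \|_2^{} = \hat{c}_{jj}^{-1} \| L_{j2}^{} \|_2^{}
  = \frac{\sqrt[4]{\det(L_j^T L_j^{})}}{\| L_{j2}^{} \|_2^{}} \, \| L_{j2}^{} \|_2^{}
  = \sqrt[4]{\det(L_j^T L_j^{})} = \beta_j^{}.
\end{displaymath}

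For the first row I would avoid recomputing everything and instead exploit the Frobenius-norm identity already established in Theorem \ref{tm3.1}. Since $X_j$ has exactly two rows, $\| X_j^{} \|_F^2 = \| e_1^T X_j^{} \|_2^2 + \| e_2^T X_j^{} \|_2^2$, and Theorem \ref{tm3.1} gives $\| X_j^{} \|_F^2 = 2 \beta_j^2$. Subtracting the value $\| e_2^T X_j^{} \|_2^2 = \beta_j^2$ just obtained leaves $\| e_1^T X_j^{} \|_2^2 = 2\beta_j^2 - \beta_j^2 = \beta_j^2$, i.e. $\| e_1^T X_j^{} \|_2^{} = \beta_j^{}$, which is the claim. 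As an independent cross-check one can instead substitute \eqref{3.9}--\eqref{3.10} directly into $\| \hat{c}_{jj}^{} L_{j1}^{} + \hat{f}_{jj}^{} L_{j2}^{} \|_2^2$; using the stationarity relation $\hat{f}_{jj}^{} \| L_{j2}^{} \|_2^2 = -\hat{c}_{jj}^{} L_{j1}^T L_{j2}^{}$ (the second normal equation in the proof of Theorem \ref{tm3.1}) collapses the three cross terms to $\hat{c}_{jj}^2 \big( \| L_{j1}^{} \|_2^2 - (L_{j1}^T L_{j2}^{})^2 / \| L_{j2}^{} \|_2^2 \big)$, and Lemma \ref{lm2.1} turns the bracket into $\det(L_j^T L_j^{}) / \| L_{j2}^{} \|_2^2$, again giving $\beta_j^2$.

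There is no genuine obstacle here; the corollary is essentially a bookkeeping consequence of Theorem \ref{tm3.1}. The only points requiring care are the sign and positivity of $\hat{c}_{jj}$ (so that $\| \hat{c}_{jj}^{-1} L_{j2}^{} \|_2 = \hat{c}_{jj}^{-1} \| L_{j2}^{} \|_2$ with no absolute value surprises) and the correct invocation of Lemma \ref{lm2.1} in the direct route. I would present the short Frobenius-difference argument as the main proof, since it reuses the work already done and makes the equal-norm (``block equilibration'') interpretation transparent.
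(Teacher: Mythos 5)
Your proof is correct and follows essentially the same route as the paper: compute $\| e_2^T X_j \|_2 = \hat{c}_{jj}^{-1}\| L_{j2}\|_2 = \beta_j$ directly from \eqref{3.9}, then deduce $\| e_1^T X_j \|_2 = \beta_j$ by subtracting from the Frobenius identity $\| X_j \|_F^2 = 2\beta_j^2$ of Theorem~\ref{tm3.1} via \eqref{3.6}. The extra remarks on the positivity of $\hat{c}_{jj}$ and the direct cross-check are fine but not needed.
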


\begin{proof}
  Recall that
  \begin{displaymath}
    X_j^T = (c_{jj}^{} L_{j1}^{} + f_{jj}^{} L_{j2}^{}, c_{jj}^{-1} L_{j2}^{})
  \end{displaymath}
  holds.  By inserting value of $\hat{c}_{jj}$ from \eqref{3.9} into
  $\| \hat{c}_{jj}^{-1} L_{j2}^T \|_2^2$, it is easy to compute the
  squared norm of the second row of $X_j$,
  \begin{equation}
    \| \hat{c}_{jj}^{-1} L_{j2}^T \|_2^2 = \frac{\| L_{j2}^{} \|_2^2}{\hat{c}_{jj}^2}
    = \sqrt{\det (L_j^T L_j^{})}
    = \beta_j^2.
    \label{3.12}
  \end{equation}
  Therefore, from \eqref{3.6}, it follows that for the squared norm of
  the first row of $X_j$ that
  \begin{equation}
    \|c_{jj}^{} L_{j1}^{} + f_{jj}^{} L_{j2}^{} \|_2^2 = \beta_j^2,
    \label{3.13}
  \end{equation}
  holds, i.e., both rows of $X_j^{} = \widehat{D}_j^{} L_j^T$ have
  the same norm $\beta_j$.
\end{proof}

The spectral condition number of the matrix $\widehat{D}_j$ from
\eqref{3.8}, as well as the Frobenius condition number can be obtained
easily.
\begin{theorem}
  Let $\widehat{D}_j$ be as in Theorem~\ref{tm3.1}.  Then
  \begin{align*}
    \kappa_2^{} (\widehat{D}_j^{})
    & = \frac{\| L_j^{} \|_F^2 + \sqrt{\| L_j^{} \|_F^4 - 4 \det(L_j^T L_j^{})}}
      {2 \sqrt{\det (L_j^T L_j^{})}}, \\
    \kappa_F^{} (\widehat{D}_j^{})
    & = \frac{\| L_j^{} \|_F^2}{\sqrt{\det (L_j^T L_j^{})}}
      = \frac{\| L_j^{} \|_F^2}{\| L_j^{} \|_2 \sigma_{\min}(L_j^{})}.
  \end{align*}
  \label{tm3.3}
\end{theorem}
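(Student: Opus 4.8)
The plan is to reduce both condition numbers to the single quantity $\|\widehat{D}_j\|_F^2$, exploiting the fact that $\widehat{D}_j$ has determinant one. First I would observe that, directly from \eqref{3.8}, $\det(\widehat{D}_j) = \hat{c}_{jj}^{}\cdot\hat{c}_{jj}^{-1} = 1$, so that $|\det(\widehat{D}_j)| = 1$ and Lemma~\ref{lm2.2} collapses to $\kappa_2(\widehat{D}_j) = \tfrac12\big(\|\widehat{D}_j\|_F^2 + \sqrt{\|\widehat{D}_j\|_F^4 - 4}\,\big)$. Thus everything hinges on computing $\|\widehat{D}_j\|_F^2 = \hat{c}_{jj}^2 + \hat{f}_{jj}^2 + \hat{c}_{jj}^{-2}$ in closed form.

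The central calculation substitutes \eqref{3.9} and \eqref{3.10} into this sum. Writing $d = \det(L_j^T L_j^{})$ and factoring out $(\|L_{j2}^{}\|_2^2\,\sqrt{d}\,)^{-1}$, the three terms combine into $\big(\|L_{j2}^{}\|_2^4 + (L_{j1}^T L_{j2}^{})^2 + d\big)\big/\big(\|L_{j2}^{}\|_2^2\,\sqrt{d}\,\big)$. Here is where Lemma~\ref{lm2.1} does the real work: since $d = \|L_{j1}^{}\|_2^2\|L_{j2}^{}\|_2^2 - (L_{j1}^T L_{j2}^{})^2$, the part $(L_{j1}^T L_{j2}^{})^2 + d$ telescopes to $\|L_{j1}^{}\|_2^2\|L_{j2}^{}\|_2^2$, and after cancelling the common factor $\|L_{j2}^{}\|_2^2$ one is left with $\|\widehat{D}_j\|_F^2 = \big(\|L_{j1}^{}\|_2^2 + \|L_{j2}^{}\|_2^2\big)\big/\sqrt{d} = \|L_j^{}\|_F^2\big/\sqrt{\det(L_j^T L_j^{})}$. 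Substituting this into the collapsed form of Lemma~\ref{lm2.2} and pulling the common factor $1/\sqrt{d}$ out of the numerator gives exactly the stated expression for $\kappa_2(\widehat{D}_j)$.

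For the Frobenius condition number I would use the same determinant-one observation from a different angle. Since $\det(\widehat{D}_j) = 1$, the two singular values satisfy $\sigma_{\max}(\widehat{D}_j)\,\sigma_{\min}(\widehat{D}_j) = 1$, hence $\{\sigma_{\max}^{-2}(\widehat{D}_j),\sigma_{\min}^{-2}(\widehat{D}_j)\} = \{\sigma_{\min}^2(\widehat{D}_j),\sigma_{\max}^2(\widehat{D}_j)\}$ and therefore $\|\widehat{D}_j^{-1}\|_F = \|\widehat{D}_j\|_F$. Consequently $\kappa_F(\widehat{D}_j) = \|\widehat{D}_j\|_F\,\|\widehat{D}_j^{-1}\|_F = \|\widehat{D}_j\|_F^2 = \|L_j^{}\|_F^2\big/\sqrt{\det(L_j^T L_j^{})}$, reusing the computation above. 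The final equality follows because $L_j^{} \in \mathbb{R}^{2n,2}$ has exactly two singular values, so $\sqrt{\det(L_j^T L_j^{})} = \sigma_{\max}(L_j^{})\,\sigma_{\min}(L_j^{}) = \|L_j^{}\|_2\,\sigma_{\min}(L_j^{})$.

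The only genuinely delicate step is the algebraic collapse in the second paragraph: the cancellation that produces $\|L_j^{}\|_F^2$ in the numerator depends entirely on invoking Lemma~\ref{lm2.1} at the right moment, and without it the expression does not simplify. Everything else is routine bookkeeping, made clean by the fact that $\det(\widehat{D}_j)=1$ removes the determinant from both condition-number formulas.
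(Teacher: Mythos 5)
Your proposal is correct and follows essentially the same route as the paper: both apply Lemma~\ref{lm2.2}, reduce everything to computing $\|\widehat{D}_j\|_F^2 = \hat{c}_{jj}^2 + \hat{f}_{jj}^2 + \hat{c}_{jj}^{-2}$, invoke Lemma~\ref{lm2.1} for the key cancellation yielding $\|L_j\|_F^2/\sqrt{\det(L_j^TL_j)}$, and use $\|\widehat{D}_j^{-1}\|_F = \|\widehat{D}_j\|_F$ for the Frobenius condition number. Your explicit use of $\det(\widehat{D}_j)=1$ and the reciprocal-singular-value argument is just a slightly more spelled-out version of what the paper does implicitly.
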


\begin{proof}
  The spectral condition number is a direct consequence of
  Lemma~\ref{lm2.2}
  \begin{displaymath}
    \kappa_2^{} (\widehat{D}_j^{})
    = \frac{\| \widehat{D}_j^{} \|_F^2
      + \sqrt{\| \widehat{D}_j^{} \|_F^4
      - 4 \det^2(\widehat{D}_j^{})}}{2 |\det(\widehat{D}_j^{})|}
  \end{displaymath}
  and the following observation obtained with the help of Lemma \ref{lm2.1}
  \begin{align*}
    \| \widehat{D}_j^{} \|_F^2
    & = c_{jj}^2 + e_{22}^2 + c_{jj}^{-2}
      = \frac{\| L_{j2}^{} \|_2^4 + (L_{j1}^T L_{j2}^{})_{}^2
      + \det (L_j^T L_j^{})}{\| L_{j2} ^{} \|_2^2 \sqrt{\det (L_j^T L_j^{})}} \\[6pt]
    & = \frac{\| L_{j2}^{} \|_2^4 + (L_{j1}^T L_{j2}^{})_{}^2
      + \| L_{j1}^{} \|_2^2 \|L_{j2}^{} \|_2^2
      - (L_j^T L_j^{})_{}^2}{\| L_{j2} \|_2^2 \sqrt{\det(L_j^T L_j^{})}} \\[6pt]
    & = \frac{\| L_{j2}^{} \|_2^2 + \| L_{j1}^{} \|_2^2}{\sqrt{\det (L_j^T L_j^{})}}
      = \frac{\| L_j^{} \|_F^2}{\sqrt{\det (L_j^T L_j^{}})}.
  \end{align*}
  The expression for
  \begin{displaymath}
    \kappa_F^{} (\widehat{D}_j^{})
    = \|\widehat{D}_j^{} \|_F^{} \| \widehat{D}_j^{-1} \|_F^{}
  \end{displaymath}
  follows immediately from \eqref{2.2} as
  $\| \widehat{D}_j^{-1} \|_F^{} = \| \widehat{D}_j^{} \|_F^{}$.
\end{proof}

\noindent
The following connection between columns $L_j$ and
the matrix $\widehat{D}_j^{-1}$ will be useful later~on.
\begin{proposition}
  Let $L_j$ be the $j$th block column of the matrix $R^T$ as in
  \eqref{3.5}, and $\widehat{D}_j$ as in Theorem~\ref{tm3.1}.
  Let the QL factorization (\cite{Golub-VanLoan-96}) of $L_j$ be given
  by
  \begin{displaymath}
    L_j = V_j \begin{pmatrix}
      0 \\
      \widehat{L}_{jj}
    \end{pmatrix}
  \end{displaymath}
  with the orthogonal matrix $V_j \in \mathbb{R}^{2n, 2n}$,
  $V_j^T V_j^{} = I_{2n}^{}$ and the lower triangular factor
  $\widehat{L}_{jj} \in \mathbb{R}^{2, 2}$,
  \begin{displaymath}
    \widehat{L}_{jj} = \begin{pmatrix}
      \hat{l}_{11}^{(j)} & 0 \\
      \hat{l}_{21}^{(j)} & \hat{l}_{22}^{(j)}
    \end{pmatrix}, \qquad \hat{l}_{11}^{(j)}, \hat{l}_{22}^{(j)} > 0.
  \end{displaymath}
  Then it holds for all $j = 1, \ldots, n$ that
  \begin{equation}
    \widehat{L}_{jj}^{} = \beta_j^{} \widehat{D}_j^{-T} .
    \label{3.14}
  \end{equation}
  \label{prop3.4}
\end{proposition}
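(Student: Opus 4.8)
The plan is to exploit the fact that the QL factorization carries all the information of the Gram matrix $L_j^T L_j$ on the small $2\times 2$ factor $\widehat{L}_{jj}$, and then to match this against the entries of $\beta_j\widehat{D}_j^{-T}$ obtained by inverting and transposing \eqref{3.8}. First I would use the orthogonality $V_j^T V_j = I_{2n}$ to collapse the factorization: writing $L_j = V_j \bigl(\begin{smallmatrix} 0 \\ \widehat{L}_{jj}\end{smallmatrix}\bigr)$ and multiplying out gives
\begin{displaymath}
  L_j^T L_j^{}
  = \begin{pmatrix} 0 & \widehat{L}_{jj}^T \end{pmatrix}
    V_j^T V_j^{}
    \begin{pmatrix} 0 \\ \widehat{L}_{jj}^{} \end{pmatrix}
  = \widehat{L}_{jj}^T \widehat{L}_{jj}^{}.
\end{displaymath}
By Lemma~\ref{lm2.1} this symmetric positive definite matrix has diagonal entries $\|L_{j1}^{}\|_2^2$, $\|L_{j2}^{}\|_2^2$ and off-diagonal entry $L_{j1}^T L_{j2}^{}$, with $\det(L_j^T L_j^{}) = \|L_{j1}^{}\|_2^2\|L_{j2}^{}\|_2^2 - (L_{j1}^T L_{j2}^{})^2 = \beta_j^4$ by \eqref{3.11}.

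Next I would read off the entries of $\widehat{L}_{jj}$. Since $\widehat{L}_{jj}$ is lower triangular with strictly positive diagonal, the relation $\widehat{L}_{jj}^T \widehat{L}_{jj}^{} = L_j^T L_j^{}$ determines it uniquely, and solving from the bottom-right corner upward gives $\hat{l}_{22}^{(j)} = \|L_{j2}^{}\|_2$ from the $(2,2)$ entry, then $\hat{l}_{21}^{(j)} = L_{j1}^T L_{j2}^{}/\|L_{j2}^{}\|_2$ from the $(2,1)$ entry, and finally $(\hat{l}_{11}^{(j)})^2 = \|L_{j1}^{}\|_2^2 - (L_{j1}^T L_{j2}^{})^2/\|L_{j2}^{}\|_2^2 = \det(L_j^T L_j^{})/\|L_{j2}^{}\|_2^2$, so that $\hat{l}_{11}^{(j)} = \beta_j^2/\|L_{j2}^{}\|_2$.

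Finally I would compute $\beta_j\widehat{D}_j^{-T}$ directly from \eqref{3.9} and \eqref{3.10}. As $\det(\widehat{D}_j^{}) = \hat{c}_{jj}^{}\hat{c}_{jj}^{-1} = 1$, the inverse is
\begin{displaymath}
  \widehat{D}_j^{-1}
  = \begin{pmatrix}
      \hat{c}_{jj}^{-1} & -\hat{f}_{jj}^{} \\
      0 & \hat{c}_{jj}^{}
    \end{pmatrix},
\end{displaymath}
so that $\beta_j^{}\widehat{D}_j^{-T}$ is lower triangular with diagonal entries $\beta_j^{}\hat{c}_{jj}^{-1} = \beta_j^2/\|L_{j2}^{}\|_2 > 0$ and $\beta_j^{}\hat{c}_{jj}^{} = \|L_{j2}^{}\|_2 > 0$, and subdiagonal entry $-\beta_j^{}\hat{f}_{jj}^{} = L_{j1}^T L_{j2}^{}/\|L_{j2}^{}\|_2$. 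These agree entrywise with $\widehat{L}_{jj}$ found above, which proves \eqref{3.14}. There is no genuine obstacle here; the only point requiring care is the inversion-and-transposition bookkeeping, since the QL factor $\widehat{L}_{jj}$ is lower triangular whereas $\widehat{D}_j$ is upper triangular, so the identity only surfaces after forming $\widehat{D}_j^{-T}$, and one must confirm that the diagonal signs match so the two lower-triangular factors are truly equal rather than differing by a sign. Alternatively, one can avoid computing the entries of $\widehat{L}_{jj}$ explicitly: simply verify that $\beta_j^{}\widehat{D}_j^{-T}$ is lower triangular with positive diagonal and satisfies $(\beta_j^{}\widehat{D}_j^{-T})^T(\beta_j^{}\widehat{D}_j^{-T}) = L_j^T L_j^{}$, and then invoke the uniqueness of the QL factor to conclude.
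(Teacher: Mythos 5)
Your proof is correct and follows essentially the same route as the paper: collapse $L_j^TL_j=\widehat{L}_{jj}^T\widehat{L}_{jj}$ via orthogonality of $V_j$, read off the entries of $\widehat{L}_{jj}$ from the Gram matrix using Lemma~\ref{lm2.1}, and match them against $\beta_j\widehat{D}_j^{-T}$ computed from \eqref{3.9}--\eqref{3.10}. The only (harmless) difference is that you make the inversion of $\widehat{D}_j$ and the positivity/uniqueness check explicit, whereas the paper states the identification of entries directly.
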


\begin{proof}
  We immediately have
  \begin{displaymath}
    L_j^T L_j^{} = (0, \widehat{L}_{jj}^T)
    V_j^T V_j^{}
    \begin{pmatrix}
      0 \\
      \widehat{L}_{jj}^{}
    \end{pmatrix}
    = \widehat{L}_{jj}^T \widehat{L}_{jj}^{}.
  \end{displaymath}
  Then, from
  \begin{displaymath}
    L_j^T L_j^{} = \begin{pmatrix}
      \| L_{j1}^{} \|_2^2 & L_{j1}^T L_{j2}^{} \\
      L_{j1}^T L_{j2}^{} & \| L_{j2}^{} \|_2^2
    \end{pmatrix}
    = \widehat{L}_{jj}^T \widehat{L}_{jj}^{}
    = \begin{pmatrix}
      (\hat{l}_{11}^{(j)})_{}^2 + (\hat{l}_{21}^{(j)})_{}^2
         & \hat{l}_{21}^{(j)} \hat{l}_{22}^{(j)} \\
      \hat{l}_{21}^{(j)} \hat{l}_{22}^{(j)} & (\hat{l}_{22}^{(j)})_{}^2
    \end{pmatrix},
  \end{displaymath}
  it follows that
  \begin{gather*}
    \hat{l}_{22}^{(j)} = \| L_{j2}^{} \|_2, \qquad
    \hat{l}_{21}^{(j)} = \frac{L_{j1}^T L_{j2}^{}}{\| L_{j2}^{} \|_2}, \\
    \hat{l}_{11}^{(j)} = \frac{\sqrt{\| L_{j1}^{} \|_2^2 \| L_{j2}^{} \|_2^2
      - (L_{j1}^T L_{j2}^{})_{}^2}}{\| L_{j2}^{} \|_2^{}}
     = \frac{\sqrt{\det (L_j^T L_j^{})}}{ \| L_{j2}^{} \|_2^{}}.
  \end{gather*}
 With \eqref{3.8}--\eqref{3.11} we obtain
 \begin{displaymath}
   \hat{l}_{11}^{(j)} = \beta_j^{} \hat{c}_{jj}^{-1}, \qquad
   \hat{l}_{22}^{(j)} = \beta_j^{} \hat{c}_{jj}^{}, \qquad
   \hat{l}_{21}^{(j)} = -\beta_j^{} \hat{f}_{jj}^{},
 \end{displaymath}
 so that $\widehat{L}_{jj}^{} = \beta_j^{} \widehat{D}_j^{-T}$
 holds.
\end{proof}

The following lemma is an easy consequence of
Proposition~\ref{prop3.4}.  It will be helpful in proving the main
theorem of this section.
\begin{lemma}
  Let $L_j$ be the $j$th block column of the matrix $R^T$ defined by
  \eqref{3.5} with the QL factorization as in
  Proposition~\ref{prop3.4} and $\widehat{D}_j$ as in
  Theorem~\ref{tm3.1}.  For any matrix $B \in \mathbb{R}^{2, 2}$ it
  holds
  \begin{displaymath}
    \| B \widehat{D}_j^{-1} \|_2^{} = \frac{\| B L_j^T \|_2^{}}{\beta_j}.
  \end{displaymath}
  \label{lm3.5}
\end{lemma}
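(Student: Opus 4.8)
The plan is to push $B$ through the QL factorization of $L_j$ and exploit the invariance of the spectral norm under orthogonal transformations. First I would transpose the factorization from Proposition~\ref{prop3.4} to write $L_j^T = (0, \widehat{L}_{jj}^T) V_j^T$, so that
\begin{displaymath}
  B L_j^T = (0, B \widehat{L}_{jj}^T) V_j^T .
\end{displaymath}
Since $V_j$ is orthogonal, $V_j^T V_j^{} = I_{2n}^{}$, and the spectral norm is invariant under multiplication by an orthogonal matrix from the right, this at once gives $\| B L_j^T \|_2^{} = \| (0, B \widehat{L}_{jj}^T) \|_2^{}$.

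Next I would observe that appending a block of zero columns does not change the spectral norm. Writing $M \assgn B \widehat{L}_{jj}^T \in \mathbb{R}^{2, 2}$, the identity $(0, M)(0, M)^T = M M^T$ shows that the nonzero singular values of the $2 \times 2n$ matrix $(0, M)$ coincide with the singular values of $M$, whence $\| (0, M) \|_2^{} = \| M \|_2^{}$. Combining this with the previous step yields $\| B L_j^T \|_2^{} = \| B \widehat{L}_{jj}^T \|_2^{}$.

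Finally I would invoke relation \eqref{3.14} of Proposition~\ref{prop3.4}, namely $\widehat{L}_{jj}^{} = \beta_j^{} \widehat{D}_j^{-T}$, so that $\widehat{L}_{jj}^T = \beta_j^{} \widehat{D}_j^{-1}$. Substituting gives $\| B L_j^T \|_2^{} = \beta_j^{} \| B \widehat{D}_j^{-1} \|_2^{}$, and dividing by $\beta_j^{}$ produces the claimed identity. There is no serious obstacle in this argument; the only point that needs a moment's care is the zero-column observation, which is a purely structural statement about which singular values are nonzero and is settled immediately by the Gram-matrix computation above.
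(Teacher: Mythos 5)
Your proof is correct and follows essentially the same route as the paper's: both use the QL factorization from Proposition~\ref{prop3.4}, the identity \eqref{3.14}, orthogonal invariance of the spectral norm, and the observation that padding with zero columns leaves the norm unchanged (you merely traverse the chain in the opposite direction and justify the zero-padding step slightly more explicitly via the Gram matrix).
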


\begin{proof}
  From \eqref{3.14} it follows
  \begin{displaymath}
    B \widehat{D}_j^{-1} = \frac{1}{\beta_j} B \widehat{L}_{jj}^T,
  \end{displaymath}
  and by using the unitary invariance of the spectral norm we obtain
  \begin{displaymath}
    \| B \widehat{D}_j^{-1} \|_2^{}
    = \frac{\| B \widehat{L}_{jj}^T \|_2^{}}{\beta_j}
    = \frac{\left\| B (0, \widehat{L}_{jj}^T) \right\|_2^{}}{\beta_j}
    = \frac{\| B (0, \widehat{L}_{jj}^T) V_j^T \|_2^{}}{\beta_j}
    = \frac{\| B L_j^T \|_2^{}}{\beta_j}.
    \qedhere
  \end{displaymath}
\end{proof}

Our findings so far allow to construct a scaling matrix
$\widehat{D}_r = \diag (\widehat{D}_1, \ldots, \widehat{D}_n)$ such
that the Frobenius norm of each block row is minimized and the two
rows in the $j$th block row of $\widehat{D}_r R$ have the same
Frobenius norm $\beta_j$.  Our next goal is to determine a scaling
$\widetilde{D}_r = \diag (\widetilde{D}_1, \ldots, \widetilde{D}_n) \in \mathcal{D}$
such that (similarly to the result obtained by van der Sluis)
all rows of the matrix $\widetilde{D}_r R$ have the same Frobenius
norm equal to $\beta$.
\begin{theorem}
  Let $R \in \mathbb{R}^{2n, 2n}$ as in \eqref{3.3} be given.  Let
  $L = R^T$ be as in \eqref{3.5} and $D_j$, $j = 1, \ldots, n$ given as in
  \eqref{3.4}.  Let $\beta_j$ be as in Theorem~\ref{tm3.1}, and let
  $\beta \geq \beta_j$.  All rows of $\widetilde{D}_r R$ have the same
  norm $\beta$ for
  $\widetilde{D}_r = \diag (\widetilde{D}_1, \ldots, \widetilde{D}_n) \in \mathcal{D}$
  where
  \begin{equation}
    \widetilde{D}_j = \begin{pmatrix}
      \tilde{c}_{jj}^{} & \tilde{f}_{jj}^{} \\
      0 & \tilde{c}_{jj}^{-1}
    \end{pmatrix}
    \label{3.15}
  \end{equation}
  for $j = 1, \ldots, n$ with
  \begin{align}
    \tilde{c}_{jj}^{} & = \frac{\| L_{j2}^{} \|_2^{}}{\beta},
    \label{3.16} \\
    \tilde{f}_{jj}^{} & = \frac{-L_{j1}^T L_{j2}^{}
      \pm \sqrt{\beta_{}^4 - \beta_j^4}}{\beta \| L_{j2}^{} \|_2^{}}.
    \label{3.17}
  \end{align}
  \label{tm3.6}
\end{theorem}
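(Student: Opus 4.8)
The plan is to exploit the row-wise splitting of the $j$th block row that was already recorded in \eqref{3.6}. For any $\widetilde{D}_j$ of the form \eqref{3.15} the transpose of the $j$th block row of $\widetilde{D}_r R$ is
\[
  X_j^T = \bigl( \tilde{c}_{jj}^{} L_{j1}^{} + \tilde{f}_{jj}^{} L_{j2}^{},\ \tilde{c}_{jj}^{-1} L_{j2}^{} \bigr),
\]
so the two rows of $X_j^{} = \widetilde{D}_j^{} L_j^T$ are exactly the two columns displayed here. Since $\widetilde{D}_r R$ is assembled block row by block row, it suffices to prove, for each $j = 1, \ldots, n$ separately, that both of these vectors have Euclidean norm $\beta$.

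The second row is immediate. Substituting $\tilde{c}_{jj}^{} = \| L_{j2}^{} \|_2 / \beta$ from \eqref{3.16} gives $\| \tilde{c}_{jj}^{-1} L_{j2}^{} \|_2^2 = \beta^2 \| L_{j2}^{} \|_2^{-2} \| L_{j2}^{} \|_2^2 = \beta^2$, independently of $\tilde{f}_{jj}$. This is exactly what forces the choice \eqref{3.16} of $\tilde{c}_{jj}$.

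For the first row I would impose $\| \tilde{c}_{jj}^{} L_{j1}^{} + \tilde{f}_{jj}^{} L_{j2}^{} \|_2^2 = \beta^2$ and expand the left-hand side as in \eqref{3.7}. With $\tilde{c}_{jj}$ already fixed, this is a quadratic equation in the single unknown $\tilde{f}_{jj}$,
\[
  \| L_{j2}^{} \|_2^2 \tilde{f}_{jj}^2
  + \frac{2 L_{j1}^T L_{j2}^{} \, \| L_{j2}^{} \|_2^{}}{\beta} \tilde{f}_{jj}^{}
  + \frac{\| L_{j1}^{} \|_2^2 \| L_{j2}^{} \|_2^2}{\beta^2} - \beta^2 = 0 ,
\]
and solving it by the quadratic formula produces \eqref{3.17}, provided the discriminant is nonnegative.

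The step I expect to be the only real work is the simplification of that discriminant. Using Lemma~\ref{lm2.1} to recognize $\| L_{j1}^{} \|_2^2 \| L_{j2}^{} \|_2^2 - (L_{j1}^T L_{j2}^{})_{}^2 = \det(L_j^T L_j^{}) = \beta_j^4$, the discriminant collapses to $\tfrac{4 \| L_{j2}^{} \|_2^2}{\beta^2}(\beta^4 - \beta_j^4)$. The hypothesis $\beta \geq \beta_j$ is precisely what makes this quantity nonnegative, so two real roots exist, and taking its square root yields exactly the two sign choices in the numerator of \eqref{3.17}. Everything else is routine substitution, and this establishes that every one of the $2n$ rows of $\widetilde{D}_r R$ has norm $\beta$.
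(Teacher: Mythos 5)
Your proposal is correct and follows essentially the same route as the paper: fix $\tilde{c}_{jj}$ from the norm of the second row, substitute into the first-row condition to obtain a quadratic in $\tilde{f}_{jj}$ (yours is the paper's equation multiplied through by $\| L_{j2}^{} \|_2^2$), and use Lemma~\ref{lm2.1} to reduce the discriminant to a multiple of $\beta^4 - \beta_j^4$, which $\beta \geq \beta_j$ makes nonnegative. No gaps.
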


\begin{proof}
  The requirement that all rows of $\widetilde{D} R = \widetilde{D} L^T$
  should have the same norm $\beta$ gives relations analogous to
  \eqref{3.12}--\eqref{3.13} for all $j = 1, \ldots, n$
  \begin{align}
    \beta_{}^2 & = \|\tilde{c}_{22}^{} L_{j2}^T\|_2^2
      = \frac{\| L_{j2}^{} \|_2^2}{\tilde{c}_{jj}^2}
      \label{3.18}\\
    \beta_{}^2 & = \| \tilde{c}_{jj}^{} L_{j1}^{}
        + \tilde{f}_{jj}^{} L_{j2}^{} \|_2^2
    = \tilde{c}_{jj}^2 \| L_{j1}^{} \|_2^2
      + 2 L_{j1}^T L_{j2}^{} \tilde{c}_{jj}^{} \tilde{f}_{jj}^{}
      + \tilde{f}_{jj}^2 \| L_{j2}^{} \|_2^2.
      \label{3.19}
  \end{align}
  Relation \eqref{3.18} immediately implies the choice of
  $\tilde{c}_{jj}$.

  Substituting \eqref{3.16} into \eqref{3.19} yields the quadratic
  equation for $\tilde{f}_{jj}$
  \begin{displaymath}
    \tilde{f}_{jj}^2
    + 2 \frac{L_{j1}^T L_{j2}^{}}{\beta \| L_{j2}^{} \|_2^{}} \tilde{f}_{jj}^{}
    + \frac{ \| L_{j1}^{} \|_2^2 }{\beta_{}^2}
    - \frac{\beta_{}^2}{\| L_{j2}^{} \|_2^2} = 0.
  \end{displaymath}
  If $\beta \geq \beta_j$, the equation has two real solutions
  \eqref{3.17},
  \begin{align*}
    \tilde{f}_{jj}^{} & = -\frac{L_{j1}^T L_{j2}^{}}{\beta \| L_{j2}^{} \|_2^{}}
      \pm \sqrt{\frac{(L_{j1}^T L_{j2}^{})_{}^2
      - \| L_{j1}^{} \|_2^2 \| L_{j2}^{} \|_2^2
      + \beta_{}^4}{\beta_{}^2 \| L_{j2}^{} \|_2^2}} \\
    & = -\frac{L_{j1}^T L_{j2}^{}}{\beta \| L_{j2}^{} \|_2}
      \pm \sqrt{\frac{-\det (L_{j1}^T L_{j2}^{})
      + \beta_{}^4}{\beta_{}^2 \| L_{j2}^{} \|_2^2}}
    = -\frac{L_{j1}^T L_{j2}^{}}{\beta \| L_{j2}^{} \|_2^{}}
      \pm \sqrt{\frac{-\beta_j^4 + \beta_{}^4}{\beta_{}^2 \| L_{j2}^{} \|_2^2}}
  \end{align*}
  with $\beta_j^2 = \sqrt{\det (L_{j1}^T L_{j2}^{})}$ as in \eqref{3.11}.
\end{proof}

It is not possible to achieve the a row scaling with a diagonal block scaling.
\begin{remark}
  If instead of the upper triangular $\widetilde{D}_j$ as in the previous
  theorem a diagonal block scaling matrix of the form
  \begin{displaymath}
    \widetilde{D}_j^{} = \diag(\tilde{c}_{jj}^{}, \tilde{c}_{jj}^{-1})
  \end{displaymath}
  is used, then it is not always possible to find $\tilde{c}_{jj}$
  such that the rows of the matrix $\widetilde{D} L^T$ have equal norms.
  \label{rem3.7}
\end{remark}

\begin{proof}
  The requirement that all rows of $\widetilde{D} R = \widetilde{D} L^T$
  should have the same norm $\beta$ gives, in analogy to
  \eqref{3.12}--\eqref{3.13} and \eqref{3.18}--\eqref{3.19} for all
  $j = 1, \ldots, n$
  \begin{displaymath}
    \tilde{c}_{jj} \| L_{j1} \|_2 = \beta, \qquad
    \frac{\| L_{j2} \|_2}{\tilde{c}_{jj}} = \beta.
  \end{displaymath}
  These two equations imply that the products $\| L_{j2} \|_2 \| L_{j1} \|_2$
  have to be identical for all indices $j$, which is only valid for
  very special cases.
\end{proof}

Now we are ready for the main theorem in the section.  Taking any
\begin{displaymath}
  \beta \geq \max_{j = 1, \ldots, n} \{ \beta_j \}
\end{displaymath}
Theorem~\ref{tm3.6} gives a block scaling $\widetilde{D}_r$ such that
all rows of the matrix $\widetilde{D}_r R$ have the same norm equal to
$\beta$.  Indeed, its condition number could be close to the optimal
scaling as it is in the standard case due to the result of van
der Sluis.
\begin{theorem}
  Let $R \in \mathbb{R}^{2n, 2n}$ as in \eqref{3.3} be given.  Let
  $L = R^T$ be as in \eqref{3.5} and $D_j$, $j = 1, \ldots, n$ given as
  \eqref{3.4}.  Let $\widehat{D}_j$, $j = 1, \ldots, n$ be as in
  \eqref{3.8} and Theorem~\ref{tm3.1}.  Let $\beta_j$, $j = 1, \ldots, n$
  be as in Theorem~\ref{tm3.1}.  Finally, let $\beta$ and $\gamma$ be
  defined as
  \begin{equation}
    \beta \assgn \max_{j = 1, \ldots, n} \{ \beta_j \}, \qquad
    \gamma \assgn \min_{j = 1, \ldots, n} \{ \beta_j \}.
    \label{3.20}
  \end{equation}
  Let $\widetilde{D}_r$ and $\widetilde{D}_j$, $j = 1, \ldots, n$
  be as in \eqref{3.15} and Theorem~\ref{tm3.6}.  Then
  $\widetilde{D}_r R$ is nearly optimally scaled.  More precisely, it
  holds
  \begin{displaymath}
    \min_{D \in \mathcal{D}} \kappa_2 (DR) \leq \kappa_2 (\widetilde{D}_r R)
    \leq \sqrt{2n} \,
    \frac{\beta \sqrt{\beta^2 + \sqrt{\beta^4 - \gamma^4}}}{\gamma^2}
    \min_{D \in \mathcal{D}} \kappa_2(D R).
  \end{displaymath}
  \label{tm3.8}
\end{theorem}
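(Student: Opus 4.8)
The plan is to mimic van der Sluis's classical argument, but carried out blockwise on the equilibrated matrix $A \assgn \widetilde{D}_r R$. Write $A = \diag(\widetilde{D}_1, \ldots, \widetilde{D}_n) R$ and let $A_j^{} = \widetilde{D}_j^{} L_j^T \in \mathbb{R}^{2, 2n}$ denote its $j$th block row. Because $\mathcal{D}$ is a group (the blocks $\begin{pmatrix} c & f \\ 0 & c^{-1} \end{pmatrix}$ are closed under products and inverses), every $D \in \mathcal{D}$ factors as $D = E \widetilde{D}_r$ with $E = \diag(E_1, \ldots, E_n) \in \mathcal{D}$, so that $\min_{D \in \mathcal{D}} \kappa_2(DR) = \min_{E \in \mathcal{D}} \kappa_2(EA)$. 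The left inequality is then immediate, since $\widetilde{D}_r \in \mathcal{D}$ is one admissible choice.

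First I would record the singular values of the blocks. Since $\det(\widetilde{D}_j) = 1$, one gets $\det(A_j^{} A_j^T) = \det(L_j^T L_j^{}) = \beta_j^4$, while Theorem~\ref{tm3.6} guarantees $\| A_j^{} \|_F^2 = 2 \beta^2$. Exactly as in Lemma~\ref{lm2.2}, Vieta's formulas for $\sigma_{\max}^2(A_j)$ and $\sigma_{\min}^2(A_j)$ give $\sigma_{\max}^2(A_j) = \beta^2 + \sqrt{\beta^4 - \beta_j^4}$ and $\sigma_{\min}(A_j) = \beta_j^2 / \sqrt{\beta^2 + \sqrt{\beta^4 - \beta_j^4}}$. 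As this last expression is increasing in $\beta_j$, its minimum over $j$ is attained at $\beta_j = \gamma$, yielding $\min_j \sigma_{\min}(A_j) = \gamma^2 / \sqrt{\beta^2 + \sqrt{\beta^4 - \gamma^4}}$. Moreover, since all $2n$ rows of $A$ have norm $\beta$, we have $\sigma_{\max}(A) = \| A \|_2^{} \leq \| A \|_F^{} = \sqrt{2n}\, \beta$, hence $\kappa_2(A) \leq \sqrt{2n}\, \beta / \sigma_{\min}(A)$.

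The heart of the proof is the blockwise lower bound $\min_{E \in \mathcal{D}} \kappa_2(EA) \geq \min_j \sigma_{\min}(A_j) / \sigma_{\min}(A)$. The main obstacle is to force the factor $\| E \|_2^{} = \max_j \sigma_{\max}(E_j)$ to appear on both sides so that it cancels, exactly as $\max_i |\hat{D}_{ii}|$ cancels in the scalar van der Sluis argument; here $\det(E_j) = 1$ is what pins $\sigma_{\max}(E_j) \sigma_{\min}(E_j) = 1$ and keeps $\| E \|_2^{}$ the single relevant quantity. For the denominator, evaluating $EA$ at a minimizing right singular vector of $A$ gives $\sigma_{\min}(EA) \leq \| E \|_2^{}\, \sigma_{\min}(A)$. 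For the numerator I would pick the block index $j^{\star}$ with $\sigma_{\max}(E_{j^{\star}}) = \| E \|_2^{}$ and use that the spectral norm dominates that of any block row, namely $\sigma_{\max}(EA) \geq \| E_{j^{\star}} A_{j^{\star}} \|_2^{} \geq \sigma_{\max}(E_{j^{\star}})\, \sigma_{\min}(A_{j^{\star}}) \geq \| E \|_2^{} \min_j \sigma_{\min}(A_j)$; the middle step follows by feeding the top left singular vector of $E_{j^{\star}}$ into $A_{j^{\star}}$, which has full row rank $2$. Dividing the two estimates cancels $\| E \|_2^{}$ and gives the bound.

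Finally I would assemble the pieces: combining $\kappa_2(A) \leq \sqrt{2n}\, \beta / \sigma_{\min}(A)$ with $\min_E \kappa_2(EA) \geq (\min_j \sigma_{\min}(A_j)) / \sigma_{\min}(A)$ cancels $\sigma_{\min}(A)$, leaving $\kappa_2(A) \leq \sqrt{2n}\, \beta / \min_j \sigma_{\min}(A_j)$. Substituting $\min_j \sigma_{\min}(A_j) = \gamma^2 / \sqrt{\beta^2 + \sqrt{\beta^4 - \gamma^4}}$ reproduces exactly the stated constant $\sqrt{2n}\, \beta \sqrt{\beta^2 + \sqrt{\beta^4 - \gamma^4}} / \gamma^2$. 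Everything apart from the two-sided use of $\| E \|_2^{}$ is bookkeeping with the block singular values already prepared in Theorems~\ref{tm3.1} and~\ref{tm3.6}.
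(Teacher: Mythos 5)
Your proof is correct, and although it follows the same van der Sluis skeleton as the paper --- bound $\|\widetilde{D}_r R\|_2 \leq \sqrt{2n}\,\beta$ from the equal row norms, then control $\sigma_{\min}$ by comparing blockwise with an arbitrary $D \in \mathcal{D}$ --- the central estimate is obtained by a genuinely different and leaner route. The paper proves, for $E = D\widetilde{D}_r^{-1}$, the bound $\|E\|_2 \leq \gamma^{-2}\sqrt{\beta^2+\sqrt{\beta^4-\gamma^4}}\;\|DR\|_2$ by factoring each block as $E_j = (D_j\widehat{D}_j^{-1})(\widehat{D}_j\widetilde{D}_j^{-1})$, invoking the QL machinery of Proposition~\ref{prop3.4} and Lemma~\ref{lm3.5} for the first factor and an explicit eigenvalue computation of $(\widehat{D}_j\widetilde{D}_j^{-1})^T\widehat{D}_j\widetilde{D}_j^{-1}$ for the second. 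You instead identify the constant as $1/\min_j\sigma_{\min}(A_j)$ for the block rows $A_j = \widetilde{D}_j L_j^T$ of the already-scaled matrix, read off $\sigma_{\min}(A_j)$ from $\|A_j\|_F^2 = 2\beta^2$ and $\det(A_j A_j^T) = \beta_j^4$ via the Vieta argument of Lemma~\ref{lm2.2}, and prove $\|EA\|_2 \geq \sigma_{\max}(E_{j^\star})\,\sigma_{\min}(A_{j^\star})$ by a direct singular-vector insertion; since $\gamma^2/\sqrt{\beta^2+\sqrt{\beta^4-\gamma^4}} = \min_j\sigma_{\min}(A_j)$, this is the same inequality as the paper's, but it bypasses $\widehat{D}_j$, Proposition~\ref{prop3.4} and Lemma~\ref{lm3.5} entirely and makes transparent why the bound has the form $\sqrt{2n}\,\beta$ divided by the worst block's smallest singular value. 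Your observation that $\mathcal{D}$ is a group cleanly justifies the substitution $D = E\widetilde{D}_r$ (the paper leaves this implicit by working with arbitrary $D$ and submultiplicativity); note only that your remark about $\det(E_j)=1$ is decorative --- the two-sided cancellation of $\|E\|_2$ works for any nonsingular block-diagonal $E$. What the paper's longer route buys is the reusable Lemma~\ref{lm3.5}, which is also the form in which the analogous Theorem~\ref{tm4.5} for the symplectic factor is argued.
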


\begin{proof}
  According to Theorem~\ref{tm3.6} all rows of the matrix
  $X = \widetilde{D}_r R$ have the same norm $\beta$.  Therefore,
  \begin{equation}
    \| X \|_2 = \| \widetilde{D}_r R \|_2
    \leq \| \widetilde{D}_r R \|_F = \sqrt{2n} \, \beta.
    \label{3.21}
  \end{equation}

  In order to be able to give a bound on
  $\kappa_2^{}(X) = \| X \|_2^{} \| X_{}^{-1}\|_2^{}$ we need to find
  a bound on  $\|X_{}^{-1}\|_2^{}$.  Since the spectral norm is
  submultiplicative, for any nonsingular matrix $D$ we have
  \begin{equation}
    \| X_{}^{-1} \|_2^{} = \| R_{}^{-1} \widetilde{D}_r^{-1} \|_2^{}
    \leq \| R_{}^{-1} D_{}^{-1} \|_2^{}  \cdot \| D \widetilde{D}_r^{-1} \|_2^{}.
    \label{3.22}
  \end{equation}
  In particular, this holds for a block-diagonal matrix
  $D = \diag(D_1, \ldots, D_n) \in \mathcal{D}$.  With this, we have
  \begin{displaymath}
    D \widetilde{D}_r^{-1} = \diag(D_1 \widetilde{D}_1^{-1}, \ldots,
      D_n \widetilde{D}_n^{-1})
  \end{displaymath}
  and
  \begin{equation}
    \| D \widetilde{D}_r^{-1} \|_2^{}
    = \max_{j = 1, \ldots, n} \| D_j^{} \widetilde{D}_j^{-1} \|_2^{}
    \leq \max_{j = 1, \ldots, n} (\| D_j^{} \widehat{D}_j^{-1} \|_2^{}
      \| \widehat{D}_j^{} \widetilde{D}_j^{-1} \|_2^{})
    \label{3.23}
  \end{equation}
  for $\widehat{D}_j$, $j = 1, \ldots, n$ as in \eqref{3.8}.  From
  Lemma~\ref{lm3.5} with $B = D_j$  we obtain
  \begin{equation}
    \| D_j^{} \widehat{D}_j^{-1} \|_2^{}
    = \frac{\| D_j^{} L_j^T \|_2^{}}{\beta_j}.
    \label{3.24}
  \end{equation}
  Estimation of $\| \widehat{D}_j^{} \widetilde{D}_j^{-1}\|_2^{}$
  is more tedious.  A straightforward calculation shows that
  \begin{displaymath}
    \widehat{D}_j^{} \widetilde{D}_j^{-1}
    = \begin{pmatrix}
      \frac{\beta}{\beta_j}
      & \pm \frac{\sqrt{\beta_{}^4 - \beta_j^4}}{\beta \beta_j} \\[6pt]
      0 & \frac{\beta_j}{\beta}
    \end{pmatrix}.
  \end{displaymath}
  In order to determine $\| \widehat{D}_j^{} \widetilde{D}_{}^{-1} \|_2^2$
  we compute
  \begin{displaymath}
    (\widehat{D}_j^{} \widetilde{D}_j^{-1})_{}^T  \widehat{D}_j^{}
      \widetilde{D}_j^{-1}
    = \begin{pmatrix}
      \frac{\beta^2}{\beta_j^2}
        & \pm \frac{\sqrt{\beta_{}^4 - \beta_j^4}}{\beta_j^2} \\[9pt]
      \pm \frac{\sqrt{\beta_{}^4 - \beta_j^4}}{\beta_j^2}
        & \frac{\beta^2}{\beta_j^2}
    \end{pmatrix},
  \end{displaymath}
  its characteristic polynomial
  \begin{displaymath}
    0 = \left( \frac{\beta^2}{\beta_j^2}  - \lambda \right)^2
    - \frac{\beta_{}^4 - \beta_j^4}{\beta_j^4}
    = \lambda^2 - 2\frac{\beta^2}{\beta_j^2} \lambda + 1,
  \end{displaymath}
  and the roots
  \begin{displaymath}
    \lambda_{1, 2} = \frac{\beta^2}{\beta_j^2}
    \pm \sqrt{\frac{\beta_{}^4 - \beta_j^4}{\beta_j^4}}.
  \end{displaymath}
  Thus,
  \begin{equation}
    \| \widehat{D}_j^{} \widetilde{D}_j^{-1} \|_2^2
    = \frac{\beta_{}^2 + \sqrt{\beta_{}^4 - \beta_j^4}}{\beta_j^2}.
    \label{3.25}
  \end{equation}
  By inserting \eqref{3.24}--\eqref{3.25} into \eqref{3.23} we obtain
  \begin{align}
    \| D \widetilde{D}_r^{-1} \|_2^{}
    & \leq \max_{j = 1, \ldots, n} (\| D_j^{} \widehat{D}_j^{-1} \|_2^{}
    \| \widehat{D}_j^{} \widetilde{D}_j^{-1} \|_2^{})
    = \max_{j = 1, \ldots, n} \frac{\sqrt{\beta_{}^2
      + \sqrt{\beta_{}^4 - \beta_j^4}}}{\beta_j^2} \| D_j^{} L_j^T \|_2^{}
      \nonumber \\
    & \leq \frac{\sqrt{\beta^2 + \sqrt{\beta^4 - \gamma^4}}}{\gamma^2}
      \max_{j = 1, \ldots, n} \| D_j^{} L_j^T \|_2^{},
    \label{3.26}
  \end{align}
  with $\gamma$ as in \eqref{3.20}.

  As $D_j^{} L_j^T$ represent the $j$th block row of $DR$ we can write
  \begin{displaymath}
    D_j^{} L_j^T = M_j^{} DR,
  \end{displaymath}
  with
  \begin{displaymath}
    M_j = (e_{2j-1}, e_{2j})^T.
  \end{displaymath}
  Since the spectral norm is submultiplicative and $ \| M_j \|_2 = 1$,
  we have
  \begin{displaymath}
    \| D_j^{} L_j^T \|_2^{} = \| M_j^{} DR \|_2^{}
    \leq \| M_j^{} \|_2^{} \| DR \|_2^{}
    = \| DR \|_2^{}
  \end{displaymath}
  for all $j = 1, \ldots, n$.  By inserting this result in
  \eqref{3.26} it holds
  \begin{equation}
    \| D \widetilde{D}_r^{-1} \|_2^{}
    \leq \frac{\sqrt{\beta^2 + \sqrt{\beta^4 - \gamma^4}}}{\gamma^2}
      \| DR \|_2^{}.
    \label{3.27}
  \end{equation}
  From \eqref{3.21}--\eqref{3.22} and \eqref{3.27} we obtain
  \begin{displaymath}
    \kappa_2 (\widetilde{D}_r R)
    \leq \sqrt{2n} \beta \frac{\sqrt{\beta^2
        + \sqrt{\beta^4 - \gamma^4}}}{\gamma^2} \kappa_2(DR).
  \end{displaymath}
  Since the previous formula is valid for all block diagonal matrices
  $D\in \mathcal{D}$ the statement of the theorem follows.
\end{proof}

%
%
\section{Nearly optimal block-column scaling of $\widetilde{S}$}
\label{sec:4}
%
%

In this section we consider the problem \eqref{1.4}.

As in the previous section, we will consider an equivalent problem
stated using permuted version of the matrices under consideration.  In
particular, we will make use of the permuted version of the matrix
$\widetilde{D}$ as in \eqref{3.1}, and of the permuted version $S$ of the
symplectic matrix $\widetilde{S}$, where
\begin{gather*}
  \widetilde{S} = (s_1, s_2, \ldots, s_{2n-1}, s_{2n}) \in \mathbb{R}^{2m, 2n} \\
  S = \widetilde{S} \widetilde{P}^T
    = (s_1, s_{n+1}, s_2, s_{n+2}, \ldots, s_n, s_{2n}).
\end{gather*}
For
\begin{displaymath}
  \widehat{J} \assgn P J P^T = \diag(J_1, \ldots, J_1) \in
  \mathbb{R}^{2m, 2m}, \qquad
  J_1= \begin{pmatrix}
    \hpm 0 & 1 \\
       - 1 & 0
  \end{pmatrix}
  \in \mathbb{R}^{2, 2}.
\end{displaymath}
it holds
\begin{displaymath}
  S^T J S = (\widetilde{S} \widetilde{P}^T)^T J \widetilde{S} \widetilde{P}^T
  = \widetilde{P}^{} \widetilde{J} \widetilde{P}^T = \widehat{J}(1:2n,1:2n),
\end{displaymath}
where
\begin{displaymath}
  \widetilde{J} = \begin{pmatrix}
    \hpm 0 & I \\
    - I & 0
  \end{pmatrix}
  \in \mathbb{R}^{2n, 2n}.
\end{displaymath}
As $SD = (\widetilde{S} \widetilde{P}^T)(\widetilde{P} \widetilde{D}
\widetilde{P}^T) = \widetilde{S} \widetilde{D} \widetilde{P}^T$ and
as the spectral norm is unitary invariant, we have
$\kappa_2(\widetilde{S} \widetilde{D}) = \kappa_2(SD)$.

Thus, instead of \eqref{1.4} we will consider the following problem.
Given a permuted symplectic matrix $S \in \mathbb{R}^{2m, 2n}$ with
$S^T J S = \widehat{J}(1:2n,1:2n)$ find a matrix $D_c$ such that
\begin{equation}
  \kappa_2^{} (S D_c^{-1}) \leq \alpha_C \min_{D \in \mathcal{D}} (S D_{}^{-1})
  \label{4.1}
\end{equation}
where $\mathcal{D}$ denotes the set of all nonsingular $2n \times 2n$
matrices of the form \eqref{3.1} and $\alpha_C \in \mathbb{R}$.

\begin{remark}
  The optimal choice $\widetilde{D}_r$ from Theorem~\ref{tm3.6} is in
  general not optimal for \eqref{4.1}, that is
  $\kappa_2^{}(S \widetilde{D}_r^{-1})$ is not always less or equal to
  $\alpha_C \min_{D \in \mathcal{D}} (S D^{-1})$.  See Example
  \ref{ex6.3} for an illustration.
  \label{rm4.1}
\end{remark}

We will proceed in three steps as in the previous section to find an
answer to \eqref{4.1}.  In the first step we look for upper triangular
blocks
\begin{equation}
  D_j^{-1} = \begin{pmatrix}
    c_{jj}^{-1} & -f_{jj}^{}\\
    0 & \hpm c_{jj}^{}
  \end{pmatrix}
  \label{4.2}
\end{equation}
such that they minimize the Frobenius norm of the product
$S_j^{} D_j^{-1}$, where the columns of $S_j$ are
\begin{displaymath}
  S_j = (s_j, s_{n+j}).
\end{displaymath}
We obtain a theorem similar to Theorem~\ref{tm3.1}.

\begin{theorem}
  Let $S = (s_1, s_{n+1}, s_2, s_{n+2}, \ldots, s_n, s_{2n}) \in \mathbb{R}^{2m, 2n}$
  with $S_{}^T J S = \widehat{J}(1:2n,1:2n)$ be given.  For $j = 1, \ldots, n$
  let $S_j = (s_j, s_{n+j})$ and $D_j$ as in \eqref{4.2}.  The Frobenius
  norm $\| S_j^{} D_j^{-1} \|_F^2$, $j = 1, \ldots, n$ is minimized for
  \begin{displaymath}
    \widebreve{D}_j^{-1} = \begin{pmatrix}
      \breve{c}_{jj}^{-1} & -\breve{f}_{jj}^{} \\
      0 & \hpm \breve{c}_{jj}^{}
    \end{pmatrix},
  \end{displaymath}
  where
  \begin{displaymath}
    \breve{c}_{jj} = \frac{\| s_j \|_2}{\sqrt[4]{\det(S_j^T S_j^{})}}, \qquad
    \breve{f}_{jj} = \frac{s_j^T s_{n+j}^{}}
          {\| s_j \|_2 \sqrt[4]{\det(S_j^T S_j^{})}}.
  \end{displaymath}
  Thus, for the Frobenius norm of the $j$th block column $S_j$ of $S$
  for the optimal $\widebreve{D}_{j}$ it holds
  \begin{displaymath}
    \|S_j^{} \widebreve{D}_j^{-1} \|_F^{} = \sqrt{2} \delta_j^{}
  \end{displaymath}
  with
  \begin{displaymath}
    \delta_j^{} \assgn \sqrt[4]{\det(S_j^T S_j^{})}.
  \end{displaymath}
  \label{tm4.2}
\end{theorem}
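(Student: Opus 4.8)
The plan is to deduce the statement directly from Theorem~\ref{tm3.1} instead of repeating the optimization, by recognizing that the two problems coincide after a trivial column/parameter relabelling. Writing the objective out,
\begin{displaymath}
  \| S_j^{} D_j^{-1} \|_F^2
  = \| c_{jj}^{-1} s_j^{} \|_2^2
  + \| c_{jj}^{} s_{n+j}^{} - f_{jj}^{} s_j^{} \|_2^2,
\end{displaymath}
one sees the same shape as \eqref{3.6}, with the two columns interchanged: here $s_j$ plays the role of $L_{j2}$ (the column scaled by $c_{jj}^{-1}$) and $s_{n+j}$ the role of $L_{j1}$.

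To make the correspondence exact I would introduce the orthogonal swap
\begin{displaymath}
  \Pi = \begin{pmatrix} 0 & 1 \\ 1 & 0 \end{pmatrix} \in \mathbb{R}^{2,2},
  \qquad \Pi = \Pi^T = \Pi^{-1},
\end{displaymath}
and use its invariance under the Frobenius norm to write $\| S_j^{} D_j^{-1} \|_F^{} = \| (S_j^{}\Pi)(\Pi D_j^{-1}\Pi) \|_F^{}$. A direct computation gives
\begin{displaymath}
  S_j^{}\Pi = (s_{n+j}^{}, s_j^{}), \qquad
  \Pi D_j^{-1}\Pi
  = \begin{pmatrix} c_{jj}^{} & 0 \\ -f_{jj}^{} & c_{jj}^{-1} \end{pmatrix}
  = \overline{D}_j^T,
\end{displaymath}
where $\overline{D}_j = \left(\begin{smallmatrix} c_{jj} & -f_{jj} \\ 0 & c_{jj}^{-1}\end{smallmatrix}\right)$ has exactly the form \eqref{3.8}. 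Setting $\overline{L}_j \assgn S_j^{}\Pi$, the problem becomes $\min \| \overline{L}_j^{} \overline{D}_j^T \|_F^{}$, which is precisely the minimization solved in Theorem~\ref{tm3.1}.

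It then remains to translate the conclusions. Since $\Pi$ is orthogonal, $\det(\overline{L}_j^T \overline{L}_j^{}) = \det(S_j^T S_j^{})$, so formula \eqref{3.9} yields $\breve{c}_{jj} = \|s_j\|_2 / \sqrt[4]{\det(S_j^T S_j^{})}$; reading off \eqref{3.10} and noting that the $(1,2)$ entry of $\overline{D}_j$ is $-f_{jj}$ flips one sign and gives $\breve{f}_{jj} = s_j^T s_{n+j}^{} / (\|s_j\|_2 \sqrt[4]{\det(S_j^T S_j^{})})$, while the optimal value $\sqrt{2}\,\beta_j$ becomes $\sqrt{2}\,\delta_j$ with $\delta_j = \sqrt[4]{\det(S_j^T S_j^{})}$. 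Global minimality is inherited from the positive-definiteness of the Hessian already verified in Theorem~\ref{tm3.1}. The only genuine obstacle is bookkeeping---keeping track of the column interchange and of the sign change in $f_{jj}$ caused by the conjugation with $\Pi$, so that the final signs come out correctly. Equivalently, one could bypass the reduction and repeat the calculus of Theorem~\ref{tm3.1} verbatim: differentiate the displayed $\| S_j^{} D_j^{-1} \|_F^2$ in $c_{jj}$ and $f_{jj}$, eliminate $f_{jj}$ from the second stationarity equation, solve the resulting quartic in $c_{jj}$ via Lemma~\ref{lm2.1}, and substitute back; the reduction merely makes clear that no new computation is required.
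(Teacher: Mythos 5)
The paper omits this proof entirely, stating only that it is analogous to Theorem~\ref{tm3.1}; your reduction via the orthogonal swap $\Pi$ is a correct and complete way of making that analogy precise, and the label and sign bookkeeping ($L_{j1}\mapsto s_{n+j}$, $L_{j2}\mapsto s_j$, $f_{jj}\mapsto -f_{jj}$, hence the flipped sign in $\breve{f}_{jj}$) all checks out. The only implicit point worth making explicit is that the argument of Theorem~\ref{tm3.1} uses nothing about $L_j$ beyond full column rank (so that $\det(L_j^TL_j)>0$ and $\|L_{j2}\|_2\neq 0$), which holds for $\overline{L}_j=(s_{n+j},s_j)$ because $s_j^TJs_{n+j}=1$ forces $s_j$ and $s_{n+j}$ to be nonzero and linearly independent.
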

The proof is analogous to the one of Theorem~\ref{tm3.1} and it is
therefore omitted here.  In addition, it is easy to prove that the two
columns of $S_j^{} \widebreve{D}_j^{-1}$ have the same norm.
\begin{corollary}
  It holds that
  \begin{displaymath}
    \| S_j^{} \widebreve{D}_j^{-1} e_1^{} \|_2^{}
    = \| S_j^{} \widebreve{D}_j^{-1} e_2^{} \|_2^{} = \delta_j^{}.
  \end{displaymath}
  \label{cor4.3}
\end{corollary}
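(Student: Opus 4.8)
The plan is to mirror the proof of Corollary~\ref{cor3.2}, now working with columns rather than rows, since $\widebreve{D}_j^{-1}$ multiplies $S_j$ on the right. First I would write out the two columns of $S_j^{} \widebreve{D}_j^{-1}$ explicitly. Since
\begin{displaymath}
  S_j^{} \widebreve{D}_j^{-1}
  = (s_j^{}, s_{n+j}^{}) \begin{pmatrix}
    \breve{c}_{jj}^{-1} & -\breve{f}_{jj}^{} \\
    0 & \hpm \breve{c}_{jj}^{}
  \end{pmatrix}
  = \left( \breve{c}_{jj}^{-1} s_j^{}, \;\; -\breve{f}_{jj}^{} s_j^{} + \breve{c}_{jj}^{} s_{n+j}^{} \right),
\end{displaymath}
the first column is $S_j^{} \widebreve{D}_j^{-1} e_1^{} = \breve{c}_{jj}^{-1} s_j^{}$ and the second is $S_j^{} \widebreve{D}_j^{-1} e_2^{} = -\breve{f}_{jj}^{} s_j^{} + \breve{c}_{jj}^{} s_{n+j}^{}$.

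Next I would compute the norm of the first column directly by inserting the value $\breve{c}_{jj} = \| s_j^{} \|_2 / \sqrt[4]{\det(S_j^T S_j^{})}$ from Theorem~\ref{tm4.2}. This gives, in complete analogy with \eqref{3.12},
\begin{displaymath}
  \| S_j^{} \widebreve{D}_j^{-1} e_1^{} \|_2^2
  = \frac{\| s_j^{} \|_2^2}{\breve{c}_{jj}^2}
  = \sqrt{\det(S_j^T S_j^{})}
  = \delta_j^2,
\end{displaymath}
which settles the first column.

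Finally, I would invoke Theorem~\ref{tm4.2} once more, which asserts $\| S_j^{} \widebreve{D}_j^{-1} \|_F^2 = 2 \delta_j^2$. Because the squared Frobenius norm of a matrix equals the sum of the squared $2$-norms of its columns, we have
\begin{displaymath}
  \| S_j^{} \widebreve{D}_j^{-1} e_1^{} \|_2^2
  + \| S_j^{} \widebreve{D}_j^{-1} e_2^{} \|_2^2
  = \| S_j^{} \widebreve{D}_j^{-1} \|_F^2
  = 2 \delta_j^2,
\end{displaymath}
so subtracting the contribution $\delta_j^2$ of the first column forces $\| S_j^{} \widebreve{D}_j^{-1} e_2^{} \|_2^2 = \delta_j^2$ as well. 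Hence both columns have norm $\delta_j$, which is the column analogue of \eqref{3.13} and is exactly the claim.

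Since this is a direct transcription of the row argument, I do not anticipate any genuine obstacle. The only points requiring care are the bookkeeping of the transposed layout (the scaling acts on the right of $S_j$ instead of on the left of $L_j^T$) and the sign of $\breve{f}_{jj}$, which differs from the row case; neither enters the computation once the first-column identity and the Frobenius-norm value from Theorem~\ref{tm4.2} are combined.
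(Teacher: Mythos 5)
Your proposal is correct and follows essentially the same route as the paper: compute the first column's norm directly from the formula for $\breve{c}_{jj}$, then obtain the second column's norm by subtracting from the Frobenius-norm identity $\| S_j^{} \widebreve{D}_j^{-1} \|_F^2 = 2\delta_j^2$ of Theorem~\ref{tm4.2}. The paper's own proof is just a more compressed version of the same two steps.
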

\begin{proof}
  The assertion follows immediately,
  \begin{displaymath}
    \| S_j^{} \widebreve{D}_j^{-1} e_1^{} \|_2^{}
    = c_{jj}^{-1} \| s_j^{} \|_2^{}
    = \sqrt[4]{\det(S_j^T S_j^{})}
    = \delta_j
  \end{displaymath}
  and
  \begin{displaymath}
    2 \delta_j^2 = \|S_j^{} \widebreve{D}_j^{-1} \|_F^2
    = \| S_j^{} \widebreve{D}_j^{-1} e_1^{} \|_2^2
    + \| S_j^{} \widebreve{D}_j^{-1} e_2^{} \|_2^2.
    \qedhere
  \end{displaymath}
\end{proof}

Next we state a theorem similar to Theorem~\ref{tm3.6}.
That is, we determine a scaling
\begin{displaymath}
  \widecheck{D}_c = \diag (\widecheck{D}_1, \ldots, \widecheck{D}_n)
    \in \mathcal{D}
\end{displaymath}
such that all columns of the matrix $S\widecheck{D}_c^{-1}$ have the same
Frobenius norm $\delta$.

\begin{theorem}
  Let $S = (s_1, s_{n+1}, s_2, s_{n+2}, \ldots, s_n, s_{2n}) \in \mathbb{R}^{2m, 2n}$
  with $S_{}^T J S = \widehat{J}(1:2n,1:2n)$ be given.  Let $\delta_j$ be as in
  Theorem~\ref{tm4.2}.  Let $\delta \geq \delta_j$.  All columns of
  $S \widecheck{D}_c^{-1}$ have the same norm $\delta$ for
  $\widecheck{D}_c = \diag(\widecheck{D}_1, \ldots, \widecheck{D}_n)
  \in \mathcal{D}$
  where
  \begin{equation}
    \widecheck{D}_j = \begin{pmatrix}
      \check{c}_{jj} & \check{f}_{jj} \\
      0 & \check{c}_{jj}^{-1}
    \end{pmatrix}
    \label{4.3}
  \end{equation}
  for $j = 1, \ldots, n$ with
  \begin{displaymath}
    \check{c}_{jj} = \frac{\|s_j \|_2}{\delta}, \qquad
    \check{f}_{jj} = \frac{s_j^T s_{n+j}^{}
      \pm \sqrt{ \delta_{}^4 - \det(S_j^T S_j^{})}}{\| s_j \|_2 \delta}.
  \end{displaymath}
  \label{tm4.4}
\end{theorem}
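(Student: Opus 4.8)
The plan is to mirror the column-wise argument that established Theorem~\ref{tm3.6}, transposing the row bookkeeping used there into column bookkeeping here; the symplectic constraint $S^T J S = \widehat{J}(1:2n,1:2n)$ will in fact play no role in the norm equalization itself, since everything depends only on the Gram matrix $S_j^T S_j^{}$. First I would write out the two columns of the $j$th block column $S_j^{} \widecheck{D}_j^{-1}$ of $S \widecheck{D}_c^{-1}$. Because
\begin{displaymath}
  \widecheck{D}_j^{-1} = \begin{pmatrix}
    \check{c}_{jj}^{-1} & -\check{f}_{jj}^{} \\
    0 & \hpm \check{c}_{jj}^{}
  \end{pmatrix},
\end{displaymath}
the first column equals $\check{c}_{jj}^{-1} s_j^{}$ and the second column equals $-\check{f}_{jj}^{} s_j^{} + \check{c}_{jj}^{} s_{n+j}^{}$.

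Next I would impose that both columns have squared norm $\delta^2$, which yields the two scalar equations
\begin{align*}
  \delta^2 & = \check{c}_{jj}^{-2} \| s_j^{} \|_2^2, \\
  \delta^2 & = \check{f}_{jj}^2 \| s_j^{} \|_2^2
    - 2 \check{c}_{jj}^{} \check{f}_{jj}^{} s_j^T s_{n+j}^{}
    + \check{c}_{jj}^2 \| s_{n+j}^{} \|_2^2,
\end{align*}
in direct analogy with \eqref{3.18}--\eqref{3.19}. The first equation immediately gives $\check{c}_{jj}^{} = \| s_j^{} \|_2^{} / \delta$. Substituting this value into the second produces the quadratic equation in $\check{f}_{jj}^{}$
\begin{displaymath}
  \check{f}_{jj}^2
  - \frac{2 s_j^T s_{n+j}^{}}{\delta \| s_j^{} \|_2^{}} \check{f}_{jj}^{}
  + \frac{\| s_{n+j}^{} \|_2^2}{\delta^2}
  - \frac{\delta^2}{\| s_j^{} \|_2^2} = 0.
\end{displaymath}

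Finally I would solve this quadratic and simplify its discriminant. Using Lemma~\ref{lm2.1} in the form $\det(S_j^T S_j^{}) = \| s_j^{} \|_2^2 \| s_{n+j}^{} \|_2^2 - (s_j^T s_{n+j}^{})^2$, the discriminant collapses to
\begin{displaymath}
  \frac{\delta^4 - \det(S_j^T S_j^{})}{\delta^2 \| s_j^{} \|_2^2},
\end{displaymath}
which is nonnegative precisely because $\delta \geq \delta_j^{}$ and $\delta_j^4 = \det(S_j^T S_j^{})$; this guarantees two real roots and delivers the stated $\pm$-formula for $\check{f}_{jj}^{}$. Since no substantial obstacle arises, the only points requiring care are the column bookkeeping --- that the first column involves $s_j^{}$ alone while the mixing occurs in the second column --- and the correct identification of the discriminant through Lemma~\ref{lm2.1} together with the sign condition $\delta \geq \delta_j^{}$.
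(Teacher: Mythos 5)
Your proposal is correct and is exactly the argument the paper intends: the paper omits the proof of Theorem~\ref{tm4.4} as ``analogous to the one of Theorem~\ref{tm3.6},'' and you carry out that analogy faithfully, including the sign flip in the cross term coming from the $-\check{f}_{jj}$ entry of $\widecheck{D}_j^{-1}$ and the identification of the discriminant via Lemma~\ref{lm2.1} and $\delta \geq \delta_j$.
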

The proof is analogous to the one of Theorem~\ref{tm3.6} and it is
therefore omitted here.

Finally, we state the main theorem on the block scaling of $S$ similar
to Theorem~\ref{tm3.8}.
\begin{theorem}
  Let $S = (s_1, s_{n+1}, s_2, s_{n+2}, \ldots, s_n, s_{2n}) \in \mathbb{R}^{2m, 2n}$
  with $S_{}^T J S = \widehat{J}(1:2n,1:2n)$ be given.  Let $\delta_j$ be as in
  Theorem~\ref{tm4.2}.  Let $\delta$ and $\mu$ be defined as
  \begin{displaymath}
    \delta \assgn \max_{j = 1,\ldots, n} \{ \delta_j \}, \qquad
    \mu \assgn \min_{j = 1,\ldots, n} \{ \delta_j \}.
  \end{displaymath}
  Let $\widecheck{D}_c$ and $\widecheck{D}_{j}, j = 1, \ldots, n$
  be as in \eqref{4.3} and Theorem \ref{tm4.4}.  Then $S\widecheck{D}_c^{-1}$
  is nearly optimally scaled.  More precisely, it holds
  \begin{displaymath}
    \min_{D \in \mathcal{D}} \kappa_2^{} (S D_{}^{-1})
    \leq \kappa_2^{}(S \widecheck{D}_c^{-1})
    \leq \sqrt{2n} \,
      \frac{\delta \sqrt{\delta^2 + \sqrt{\delta^4 - \mu^4}}}{\mu^2}
    \min_{D \in \mathcal{D}} \kappa_2^{}(S D_{}^{-1}).
  \end{displaymath}
  \label{tm4.5}
\end{theorem}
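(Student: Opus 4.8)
The plan is to mirror the proof of Theorem~\ref{tm3.8} line by line, transposing every statement about block rows of $R$ into the corresponding statement about block columns of $S$, with one essential modification: since $S\in\mathbb{R}^{2m,2n}$ is rectangular I cannot invert $Y\assgn S\widecheck{D}_c^{-1}$ and must argue through its smallest singular value. Note first that $S$ has full column rank, because $S^TJS=\widehat{J}(1:2n,1:2n)$ is nonsingular; hence $\sigma_{\min}(Y)>0$ and $\kappa_2(Y)=\|Y\|_2/\sigma_{\min}(Y)$. By Theorem~\ref{tm4.4} all $2n$ columns of $Y$ have norm $\delta$, so $\|Y\|_2\le\|Y\|_F=\sqrt{2n}\,\delta$, which is the analogue of~\eqref{3.21}.

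For the lower singular value I would exploit that, for every $D\in\mathcal{D}$, the factorization $Y=(SD^{-1})(D\widecheck{D}_c^{-1})$ combined with the submultiplicativity $\sigma_{\min}(AB)\ge\sigma_{\min}(A)\,\sigma_{\min}(B)$ (valid because $SD^{-1}$ has full column rank) gives $\sigma_{\min}(Y)\ge\sigma_{\min}(SD^{-1})\,\sigma_{\min}(D\widecheck{D}_c^{-1})$, i.e. $1/\sigma_{\min}(Y)\le\|\widecheck{D}_c D^{-1}\|_2/\sigma_{\min}(SD^{-1})$, the analogue of~\eqref{3.22}. As $\widecheck{D}_c D^{-1}=\diag(\widecheck{D}_1 D_1^{-1},\ldots,\widecheck{D}_n D_n^{-1})$ is block diagonal, $\|\widecheck{D}_c D^{-1}\|_2=\max_j\|\widecheck{D}_j D_j^{-1}\|_2\le\max_j\bigl(\|\widecheck{D}_j\widebreve{D}_j^{-1}\|_2\,\|\widebreve{D}_j D_j^{-1}\|_2\bigr)$, matching~\eqref{3.23}.

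Two ingredients then close the estimate. First I would record the column analogues of Proposition~\ref{prop3.4} and Lemma~\ref{lm3.5}: writing the QR factorization $S_j=Q_j\bigl(\begin{smallmatrix}\widehat{R}_{jj}\\0\end{smallmatrix}\bigr)$ with $\widehat{R}_{jj}\in\mathbb{R}^{2,2}$ upper triangular, a comparison of $S_j^TS_j=\widehat{R}_{jj}^T\widehat{R}_{jj}$ with the entries computed in Theorem~\ref{tm4.2} yields $\widehat{R}_{jj}=\delta_j\widebreve{D}_j$, whence $\|\widebreve{D}_j B\|_2=\|S_jB\|_2/\delta_j$ for every $B\in\mathbb{R}^{2,2}$ by unitary invariance. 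Taking $B=D_j^{-1}$ gives $\|\widebreve{D}_j D_j^{-1}\|_2=\|S_j D_j^{-1}\|_2/\delta_j$, and since $S_j D_j^{-1}=SD^{-1}(e_{2j-1},e_{2j})$ is a block column of $SD^{-1}$ we obtain $\|S_j D_j^{-1}\|_2\le\|SD^{-1}\|_2$. Second, a short computation shows
\[
  \widecheck{D}_j\widebreve{D}_j^{-1}
  =\begin{pmatrix}
    \delta_j/\delta & \pm\dfrac{\sqrt{\delta^4-\delta_j^4}}{\delta\delta_j}\\[6pt]
    0 & \delta/\delta_j
  \end{pmatrix},
\]
whose Gram matrix has determinant $1$ and trace $2\delta^2/\delta_j^2$, so its characteristic polynomial $\lambda^2-2(\delta^2/\delta_j^2)\lambda+1$ coincides with the one in the derivation of~\eqref{3.25} and $\|\widecheck{D}_j\widebreve{D}_j^{-1}\|_2^2=(\delta^2+\sqrt{\delta^4-\delta_j^4})/\delta_j^2$.

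Combining these and using $\delta_j\ge\mu$ (the factor is decreasing in $\delta_j$) yields $\|\widecheck{D}_c D^{-1}\|_2\le\frac{\sqrt{\delta^2+\sqrt{\delta^4-\mu^4}}}{\mu^2}\|SD^{-1}\|_2$, the analogue of~\eqref{3.27}. Substituting this together with $\|Y\|_2\le\sqrt{2n}\,\delta$ into the bound $1/\sigma_{\min}(Y)\le\|\widecheck{D}_c D^{-1}\|_2/\sigma_{\min}(SD^{-1})$ gives $\kappa_2(S\widecheck{D}_c^{-1})\le\sqrt{2n}\,\frac{\delta\sqrt{\delta^2+\sqrt{\delta^4-\mu^4}}}{\mu^2}\,\kappa_2(SD^{-1})$; since this holds for every $D\in\mathcal{D}$ I take the minimum over $\mathcal{D}$, while the left-hand inequality is immediate because $\widecheck{D}_c\in\mathcal{D}$. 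I expect the only genuine obstacle to be the rectangularity of $S$: one must systematically replace the inverse appearing in~\eqref{3.22} by $\sigma_{\min}$ and justify $\sigma_{\min}(AB)\ge\sigma_{\min}(A)\sigma_{\min}(B)$ for the full-column-rank factor $SD^{-1}$; everything else transcribes verbatim from Section~\ref{sec:3}.
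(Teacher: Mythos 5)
Your proof is correct and is precisely the argument the paper has in mind: the paper omits the proof of Theorem~\ref{tm4.5}, stating only that it is analogous to that of Theorem~\ref{tm3.8}, and your transcription carries out that analogy faithfully (the column analogues of Proposition~\ref{prop3.4} and Lemma~\ref{lm3.5}, the computation of $\|\widecheck{D}_j\widebreve{D}_j^{-1}\|_2$, and the final assembly all check out). You also correctly identify and resolve the one genuine subtlety the paper glosses over, namely replacing $\|X^{-1}\|_2$ by $1/\sigma_{\min}$ for the rectangular full-column-rank factor $SD^{-1}$.
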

The proof is analogous to the one of Theorem \ref{tm3.8} and it is
therefore omitted here.

\begin{remark}
  The optimal choice $\widetilde{D}_c$ from Theorem~\ref{tm4.5} is in
  general not optimal for \eqref{3.2}, that is
  $\kappa_2^{}(\widecheck{D}_r R)$ is not always less or equal to
  $\alpha_R \min_{D \in \mathcal{D}} (D R)$.  See Example \ref{ex6.3}
  for an illustration.
  \label{rm4.6}
\end{remark}

%
%
\section{Connections to related factorizations}
\label{sec:5}
%
%

In the next two subsections we show that the stated results are valid
for the both factors obtained from the symplectic QR factorization of
matrix, and the factor $R$ obtained by the skew-symmetric
(Cholesky-like) factorization of a (skew symmetric) matrix $A$.

%
%
\subsection{Symplectic QR factorization}
\label{subsec:5.1}
%
%

The symplectic QR factorization of a matrix $G \in \mathbb{R}^{2m,2n}$
into the product $QR$ with an upper triangular matrix
$R \in \mathbb{R}^{2n, 2n}$ and an matrix $Q \in \mathbb{R}^{2m, 2n}$
which satisfies $Q_{}^T \widehat{J} Q = \widehat{J}(1:2n,1:2n)$ has
been proposed in~\cite{SingerSanja-SingerSasa-2003}.  If $G_{}^T J G$ is
nonsingular, then $G$ can be factorized as $G \overline{P} = QR$ where
$\overline{P}$ is a suitable permutation matrix.

The result of Section~\ref{sec:3} is valid as stated since the
symplectic QR factorization computes the upper triangular factor $R$.
The results of Section~\ref{sec:4} can be applied to matrix $Q$ since
$P Q = S$.  Therefore, we have
\begin{displaymath}
  S_{}^T J S = (Q P)^T J P Q
  = Q_{}^T \widehat{J} Q = \widehat{J}(1:2n,1:2n)
\end{displaymath}
and, due to unitary equivalence of the spectral norm
\begin{displaymath}
  \kappa_2^{} (Q D_c^{-1}) = \kappa_2^{}(S D_c^{-1})
\end{displaymath}
for $D_c \in \mathcal{D}$.

%
%
\subsection{Skew-symmetric Cholesky-like factorization}
\label{subsec:5.2}
%
%

For any $G \in \mathbb{R}^{2m, 2n}$, $m \geq n$ the matrix $G_{}^T J G$
is skew-symmetric as $J^T = -J$.  Assume that we are given a
permuted SR decomposition of $G$, $G = SR$ with the permuted
symplectic matrix $S$ (that is, $S_{}^T J S = \widehat{J}(1:2n,1:2n)$)
and an upper triangular matrix $R$.  Then
\begin{equation}
  A \assgn G_{}^T J G = R_{}^T S_{}^T J S R = R_{}^T \widehat{J}(1:2n,1:2n) R.
  \label{5.1}
\end{equation}
This factorization of $A$ (almost) corresponds to the Cholesky-like
factorization of skew-symmetric matrices given in~\cite{Bunch-82} (see
also~\cite{Benner-Byers-Fassbender-Mehrmann-Watkins-2000}).  In these
papers it is proven that any skew-symmetric matrix $B \in \mathbb{R}^{2m, 2m}$
whose leading principal submatrices of even dimension are nonsingular
has a unique factorization
\begin{displaymath}
  A = L_{}^T \widehat{J} L
\end{displaymath}
where $L$ is upper triangular with $\ell_{2j-1, 2j} = 0$,
$\ell_{2j-1, 2j-1} > 0$ and $\ell_{2j,2j} = \pm \ell_{2j-1, 2j-1}$ for
$j = 1, \ldots, m$.  Thus $L$ has $2 \times 2$ blocks of the form
\begin{displaymath}
  \begin{pmatrix}
    \ell & 0 \\
    0 & \pm \ell
  \end{pmatrix}
\end{displaymath}
running down the main diagonal.

Thus, if $R$ in \eqref{5.1} is such that its $2 \times 2$ diagonal
blocks are matrices of the form
\begin{displaymath}
  \begin{pmatrix}
    r & 0 \\
    0 & \pm r
  \end{pmatrix}
\end{displaymath}
the decomposition \eqref{5.1} (and hence the SR decomposition of $G$)
is unique (the fact concerning the unique SR decomposition has already
been noted in \cite{Mehrmann-79}).  Moreover, Theorem~\ref{tm3.8} can
be applied to $R$ and we obtain not only an optimal scaled $R$ in the
SR decomposition of $G$, but also the unique Cholesky-like
factorization with optimal block scaling.

But usually, $R$ will have diagonal blocks $R_{jj}$, $j = 1, \ldots, n$
which are upper triangular,
\begin{displaymath}
  R_{jj} = \begin{pmatrix}
    r_{11}^{(j)} & r_{12}^{(j)} \\
    0 & r_{22}^{(j)}
  \end{pmatrix}, \qquad r_{11}^{(j)} r_{22}^{(j)} \neq 0
\end{displaymath}
for $j = 1, \ldots, n$.  Again, Theorem~\ref{tm3.8} can be applied to
$R$ and we obtain not only an optimal scaled $R$ in the SR
decomposition of $G$, but also a non-unique Cholesky-like
factorization with optimal block scaling.

From the factorization $A = L_{}^T \widehat{J} L$ it can be seen that
any scaling matrix $D_L$ applied to $L$ needs to satisfy
\begin{displaymath}
  D_L^T \widehat{J} D_L^{} = \widehat{J}
\end{displaymath}
so that
\begin{displaymath}
  A = L^T \widehat{J} L = (D_L L)^T \widehat{J} (D_L L)
\end{displaymath}
holds.

%
%
\section{Numerical examples}
\label{sec:6}
%
%

In this section we show behavior of the nearly optimal scalings of the
factors $R$ and $S$. The first example shows that the condition number
of the scaled matrix $\widetilde{D}_r R$ can be significantly smaller than
the condition number of $R$, while the second example shows that the bound
\begin{displaymath}
  \alpha_R = \sqrt{2n} \, \frac{\beta \sqrt{\beta^2
      + \sqrt{\beta^4 - \gamma^4}}}{\gamma^2}
\end{displaymath}
can be significantly larger that $1$, and the condition number of the
scaled matrix can rise.
\begin{example}
Let
\begin{displaymath}
  R = \begin{pmatrix}
    a & 0 & a^{-2} & a^{-2} & a^{-2} & a^{-2} \\
      & a & a^{-2} & a^{-2} & a^{-2} & a^{-2} \\
      &   &   a^2 &     0 & a^{-2} & a^{-2} \\
      &   &       &   a^2 & a^{-2} & a^{-2} \\
      &   &       &       & a^{-1} &     0 \\
      &   &       &       &       & a^{-1}
  \end{pmatrix},
\end{displaymath}
be obtained by the SR decomposition, where $a$ is a small parameter,
$0 < a < 1$.

If, for example, $a = 0.1$ then the optimal block-diagonal scaling
from Theorem~\ref{tm3.8} applied from the left to the rows of $R$ is
\begin{displaymath}
  \widetilde{D}_r \approx \begin{pmatrix}
      20.0000 &        -19.9520 &         &                 &        & \\
              & \hpm\hpz 0.0500 &         &                 &        & \\
              &                 & 14.1421 &        -14.0714 &        & \\
              &                 &         & \hpm\hpz 0.0707 &        & \\
              &                 &         &                 & 1.0000 & 0.0000 \\
              &                 &         &                 &        & 1.0000
    \end{pmatrix},
\end{displaymath}
while the final scaled matrix $\widetilde{D}_r R$ is
\begin{align*}
  \widetilde{D}_r R & \approx \begin{pmatrix}
    2.0000 &     -1.9952 & 0.4976 & \hpm 0.4976 & \hpz 0.4976 & \hpz 0.4976 \\
           & \hpm 0.0050 & 5.0000 & \hpm 5.0000 & \hpz 5.0000 & \hpz 5.0000 \\
           &             & 0.1414 &     -0.1407 & \hpz 7.0697 & \hpz 7.0697 \\
           &             &        & \hpm 0.0007 & \hpz 7.0711 & \hpz 7.0711 \\
           &             &        &             &     10.0000 & \hpz 0.0000 \\
           &             &        &             &             &     10.0000
  \end{pmatrix},
\end{align*}
with all row norms equal to $\beta = 10$.  Note that
$\beta_1 \approx 5.3183$, $\gamma = \beta_2 \approx 1.4142$, while
$\beta = \beta_3 = 10$. Therefore, the parameter $\alpha_R$ in the
statement of Theorem~\ref{tm3.8} is $\alpha_R \approx 244.9367$.

For different parameters $a$ we have different values for the
condition numbers of the matrices $R$ and $\widetilde{D}_r R$.
\begin{center}
    \begin{tabular}{@{}ccccc@{}}
      \toprule
      $a$ & $5.0e{-}01$ & $1.0e{-}01$ & $5.0e{-}02$ & $1.0e{-}02$ \\
      \midrule
      $\kappa_2(R)$
        & $5.1810e{+}03$
        & $1.6803e{+}09$
        & $4.1985e{+}11$
        & $1.6080e{+}17$ \\
      $\kappa_2(\widetilde{D}_r R)$
        & $1.5089e{+}03$
        & $1.5829e{+}08$
        & $1.9053e{+}10$
        & $1.3925e{+}15$ \\
      $\beta$
        & $2.3796e{+}00$
        & $1.0000e{+}01$
        & $2.0000e{+}01$
        & $1.0000e{+}02$ \\
      $\gamma$
        & $1.4146e{+}00$
        & $1.4142e{+}00$
        & $1.4142e{+}00$
        & $1.4142e{+}00$ \\
      $\alpha_R$
        & $1.3638e{+}01$
        & $2.4494e{+}02$
        & $9.7978e{+}02$
        & $2.4495e{+}04$ \\
      \bottomrule
    \end{tabular}
\end{center}
\looseness=-1
Since the factor $R$ has quite wildly scaled rows, with the nontrivial
elements in each $2 \times 2$ diagonal block significantly smaller
than the elements in the rest of the corresponding rows, the scaled
triangular factor $\widetilde{D}_r R$ has a significantly lower
condition number than $R$.
\label{ex6.1}
\end{example}

\begin{example}
Let
\begin{displaymath}
  R = \begin{pmatrix}
    a^{-1} &     0 & a^{-1} & a^{-1} & a^{-1} & a^{-1} \\
          & a^{-1} & a^{-1} & a^{-1} & a^{-1} & a^{-1} \\
          &       &     a &     0 &     a &    a \\
          &       &       &     a &     a &    a \\
          &       &       &       & a^{-1} &    0 \\
          &       &       &       &       & a^{-1}
  \end{pmatrix},
\end{displaymath}
be obtained by the SR decomposition, where $a$ is a small parameter,
$0 < a < 1$.

If, for example, $a = 1 \cdot 10^{-1}$ then the optimal block-scaling from
Theorem~\ref{tm3.8} is
\begin{displaymath}
  \widetilde{D}_r \approx \begin{pmatrix}
      1.2910 &     -1.0328 &        &              &        & \\
             & \hpm 0.7746 &        &              &        & \\
             &             & 0.0100 & \hpz 99.9933 &        & \\
             &             &        &     100.0000 &        & \\
             &             &        &              & 0.5774 & 1.6330 \\
             &             &        &              &        & 1.7321
  \end{pmatrix},
\end{displaymath}
while the optimally scaled matrix $\widetilde{D}_r R$ is equal to
\begin{align*}
  \widetilde{D}_r R & \approx \begin{pmatrix}
      12.9099 &        -10.3280 & 2.5820 & \hpz 2.5820 & \hpz 2.5820 & \hpz 2.5820 \\
              & \hpm\hpz 7.7460 & 7.7460 & \hpz 7.7460 & \hpz 7.7460 & \hpz 7.7460 \\
              &                 & 0.0010 & \hpz 9.9993 &     10.0003 &     10.0003 \\
              &                 &        &     10.0000 &     10.0000 &     10.0000 \\
              &                 &        &             & \hpz 5.7735 &     16.3299 \\
              &                 &        &             &             &     17.3205
  \end{pmatrix},
\end{align*}
with all rows-norms equal to $\beta \approx 17.3205$.

For different parameters $a$ we have different values for the
condition numbers of the matrices $R$ and $\widetilde{D}_r R$.
\begin{center}
    \begin{tabular}{@{}ccccc@{}}
      \toprule
      $a$ & $5.0e{-}01$ & $1.0e{-}01$ & $5.0e{-}02$ & $1.0e{-}02$ \\
      \midrule
      $\kappa_2(R)$
        & $5.5000e{+}01$
        & $1.0150e{+}03$
        & $4.0150e{+}03$
        & $1.0002e{+}05$ \\
      $\kappa_2(\widetilde{D}_r R)$
        & $1.3521e{+}02$
        & $7.7471e{+}04$
        & $1.2394e{+}06$
        & $7.7460e{+}08$ \\
      $\beta$
        & $3.4641e{+}00$
        & $1.7321e{+}01$
        & $3.4641e{+}01$
        & $1.7321e{+}02$ \\
      $\gamma$
        & $7.4767e{-}01$
        & $1.4953e{-}01$
        & $7.4768e{-}02$
        & $1.4953e{-}02$ \\
      $\alpha_R$
        & $1.0513e{+}02$
        & $6.5727e{+}04$
        & $1.0516e{+}06$
        & $6.5727e{+}08$ \\
      \bottomrule
    \end{tabular}
\end{center}
This example shows that the optimal scaling, such that all rows have
the same norm, can worsen the condition number of $R$.
\label{ex6.2}
\end{example}

The third example shows that the condition number of
$S\widecheck{D}_r^{-1}$ can be significantly smaller than the condition
number of $S$, while the fourth example shows that the bound
\begin{displaymath}
  \alpha_C = \sqrt{2n} \, \frac{\delta \sqrt{\delta^2
      + \sqrt{\delta^4 - \mu^4}}}{\mu^2}
\end{displaymath}
can be larger than $1$, and the condition number of the scaled matrix
can rise.

Matrices $S$ in the next two examples are computed in the 80-bit
extended precision arithmetic.  The easiest way to produce the
examples is to compute the matrix $Q$ by the symplectic QR factorization
(see~\cite{SingerSanja-SingerSasa-2003}) and then permute the rows,
$S = P Q$, to obtain $S$.  Note that the matrices $R$ are not needed for
conclusion about the optimal scaling of the factor $S$ in the SR
decomposition.  If $G$ is needed, any triangular matrix $R$ will do.
Then $G$ is computed in multiple precision arithemtic as $G = SR$.
\begin{example}
Now suppose that $S$ is computed by the SR decomposition of the matrix
{\scriptsize\begin{displaymath}
  G \approx \begin{pmatrix}
    -8.0000e{-}08 & \hpm 5.9999e{-}10 &     -9.9993e{-}06 &     -2.0816e{-}07 &     -1.0025e{-}05 &     -1.0002e{-}01 \\
\hpm 2.0002e{+}03 &     -9.8412e{+}03 & \hpm 2.1081e{-}01 & \hpm 8.6657e{-}03 & \hpm 1.6001e{+}02 & \hpm 1.0001e{+}03 \\
\hpm 1.9999e{+}00 &     -9.8397e{+}00 &     -1.1008e{+}01 &     -2.2904e{-}01 & \hpm 1.4898e{-}01 &     -1.0097e{-}01 \\
    -1.0000e{-}03 & \hpm 2.0000e{-}05 & \hpm 9.9001e{-}06 & \hpm 1.0208e{-}05 & \hpm 1.0008e{+}00 &     -1.0108e{-}03 \\
\hpm 9.9990e{-}02 & \hpm 7.9902e{-}03 &     -9.9999e{-}01 &     -2.0898e{-}02 & \hpm 6.9991e{-}03 &     -1.0001e{-}01 \\
    -1.9785e{-}02 & \hpm 9.7344e{-}02 & \hpm 1.0003e{+}03 & \hpm 2.0903e{+}01 & \hpm 9.9879e{-}01 & \hpm 1.0003e{+}02
  \end{pmatrix},
\end{displaymath}}
as
{\scriptsize\begin{displaymath}
  S \approx \begin{pmatrix}
    -8.0000e{-}10 & \hpm 7.0000e{-}10 & \hpm 9.9993e{-}06 & \hpm 8.0000e{-}10 & \hpm 9.9999e{-}06 & \hpm 1.0000e{+}00 \\
\hpm 2.0002e{+}01 &     -1.0001e{+}03 &     -2.0900e{-}02 &     -2.0901e{-}09 & \hpm 8.8412e{-}07 & \hpm 9.8014e{-}03 \\
\hpm 1.9999e{-}02 &     -9.9997e{-}01 & \hpm 1.1008e{+}01 & \hpm 1.0010e{-}03 & \hpm 9.8545e{-}10 &     -1.0029e{-}04 \\
    -1.0000e{-}05 & \hpm 1.0000e{-}05 &     -1.0000e{-}05 & \hpm 1.0000e{-}05 &     -1.0000e{-}00 & \hpm 1.0000e{-}05 \\
\hpm 9.9990e{-}04 &     -9.0000e{-}07 & \hpm 1.0000e{+}00 & \hpm 1.0000e{-}07 &     -8.9980e{-}10 &     -1.0010e{-}05 \\
    -1.9785e{-}04 & \hpm 9.8927e{-}03 &     -1.0003e{+}03 &     -1.0992e{-}04 & \hpm 8.9912e{-}07 & \hpm 1.0002e{-}02
  \end{pmatrix}.
\end{displaymath}}
The corresponding $R$ is well-conditioned
{\scriptsize\begin{displaymath}
  R \approx \begin{pmatrix}
    1.0000e{+}02 & 8.0000e{+}00 & \hpm 1.0000e{-}02 &     -7.8600e{-}05 & \hpm 8.0000e{+}00 & \hpm 1.0201e{-}05 \\
                 & 1.0000e{+}01 & \hpm 1.0110e{-}05 &     -9.8000e{-}06 & \hpm 1.0000e{-}05 &     -1.0000e{+}00 \\
                 &              &     -1.0000e{+}00 &     -2.0898e{-}02 &     -1.0001e{-}03 &     -1.0001e{-}01 \\
                 &              &                   & \hpm 9.9988e{-}01 & \hpm 9.0000e{-}06 & \hpm 9.9999e{-}05 \\
                 &              &                   &                   &     -1.0009e{+}00 & \hpm 1.0008e{-}03 \\
                 &              &                   &                   &                   &     -1.0002e{-}01
  \end{pmatrix}.
\end{displaymath}}

The optimal scaling by Theorem~\ref{tm4.5} is obtained by a matrix
$\widecheck{D}_c$, where
\noindent
{\scriptsize\begin{displaymath}
  \widecheck{D}_c \approx \begin{pmatrix}
2.0001e{+}01 &     -1.0001e{+}03 &              &              &              & \\
             & \hpm 4.9997e{-}02 &              &              &              & \\
             &                   & 1.0003e{+}03 & 1.3067e{-}04 &              & \\
             &                   &              & 9.9973e{-}04 &              & \\
             &                   &              &              & 9.9995e{-}01 & 1.7558e{-}08 \\
             &                   &              &              &              & 1.0000e{+}00
  \end{pmatrix}.
\end{displaymath}}

After the optimal scaling we get
{\scriptsize\begin{displaymath}
  S \widecheck{D}_c^{-1} \approx \begin{pmatrix}
    -3.9997e{-}11 &     -7.8606e{-}07 & \hpm 9.9966e{-}09 & \hpm 7.9891e{-}07 & \hpm 1.0000e{-}05 & \hpm 9.9995e{-}01 \\
\hpm 1.0000e{+}00 &     -1.0003e{-}02 &     -2.0895e{-}05 & \hpm 6.4034e{-}07 & \hpm 8.8417e{-}07 & \hpm 9.8009e{-}03 \\
\hpm 9.9990e{-}04 & \hpm 1.0895e{-}05 & \hpm 1.1005e{-}02 & \hpm 9.9978e{-}01 & \hpm 9.8550e{-}10 &     -1.0029e{-}04 \\
    -4.9997e{-}07 &     -9.8007e{-}03 &     -9.9973e{-}09 & \hpm 1.0003e{-}02 &     -1.0000e{+}00 & \hpm 1.0017e{-}05 \\
\hpm 4.9992e{-}05 & \hpm 9.9995e{-}01 & \hpm 9.9973e{-}04 &     -3.0639e{-}05 &     -8.9984e{-}10 &     -1.0009e{-}05 \\
    -9.8920e{-}06 &     -1.0779e{-}07 &     -9.9999e{-}01 & \hpm 2.0753e{-}02 & \hpm 8.9917e{-}07 & \hpm 1.0002e{-}02
  \end{pmatrix}.
\end{displaymath}}
In this case
\begin{displaymath}
  \kappa_2(S) = 1.0327e{+}06, \qquad
  \kappa_2(S\widecheck{D}_c^{-1}) = 1.0623, \qquad
  \delta = 1.000049, \quad
  \mu = 1.000024, \quad
\end{displaymath}
and the row-norms are equal to $1.000049$ while $\alpha_C = 3.4815$.
Note that in this case we have a very precise estimation of the
maximal condition number over all block diagonal scalings of
the form \eqref{3.1}.

If the matrix $S$ is scaled by the factor $\widetilde{D}_r^{-1}$ from
Example~\ref{ex6.1}, instead of $\widecheck{D}_c^{-1}$, then
$\kappa_2(S\widetilde{D}_r^{-1}) \approx 3.8465e{+10}$.  In the case
of $\widetilde{D}_r^{-1}$ from Example~\ref{ex6.2} the condition
number is even higher,
$\kappa_2(S\widetilde{D}_r^{-1}) \approx 2.0251e{+14}$.

On the other hand, if $\widecheck{D}_c$ is used to scale $R$ from
Example~\ref{ex6.1} we get
$\kappa_2(\widecheck{D}_c R) \approx 5.4894e{+20}$.  For $R$ from
Example~\ref{ex6.2} the result is very similar,
$\kappa_2(\widecheck{D}_c R) \approx 2.29358e{+17}$.
\label{ex6.3}
\end{example}

\begin{example}
Now suppose that $S$ is computed by the SR decomposition of $G$,
{\scriptsize\begin{displaymath}
  G \approx \begin{pmatrix}
\hpm 1.0871e{+}02 & \hpm 1.4643e{+}01 &     -5.4969e{-}01 &     -1.1806e{-}02 & \hpm 9.2375e{+}00 &     -6.5123e{-}01 \\
    -5.2820e{+}01 &     -8.8338e{+}00 & \hpm 5.8813e{-}01 & \hpm 1.3947e{+}00 &     -2.8501e{+}00 & \hpm 4.1022e{-}01 \\
    -1.8322e{+}01 & \hpm 1.5381e{+}00 &     -5.1659e{-}02 &     -9.0207e{-}01 &     -1.8338e{+}00 &     -2.9221e{-}01 \\
    -5.9464e{+}01 & \hpm 1.1893e{+}00 &     -5.9404e{-}03 & \hpm 4.0911e{-}05 &     -4.2155e{+}00 &     -5.9519e{-}01 \\
\hpm 3.7614e{+}01 & \hpm 3.0091e{+}00 &     -3.9718e{-}01 &     -8.4084e{-}03 & \hpm 3.7575e{+}00 & \hpm 4.9976e{-}04 \\
\hpm 6.1056e{+}01 & \hpm 4.3350e{+}00 &     -1.7096e{+}00 & \hpm 1.2893e{-}01 & \hpm 6.0988e{+}00 &     -5.6762e{-}02
  \end{pmatrix},
\end{displaymath}}
as
\begin{displaymath}
  S \approx \begin{pmatrix}
\hpm 1.0871 & \hpm 0.5946 & \hpm 0.5606 & \hpm 0.0000 &     -0.5411 & -1.08e{-}19 \\
    -0.5282 &     -0.4608 &     -0.5934 & \hpm 1.3825 &     -1.3738 & \hpm 1.0868 \\
    -0.1832 & \hpm 0.3004 & \hpm 0.0498 &     -0.9011 & \hpm 0.3677 &     -0.1288 \\
    -0.5946 & \hpm 0.5946 & \hpm 0.0000 & \hpm 0.0000 &     -0.5411 & \hpm 0.0000 \\
\hpm 0.3761 & 1.02e{-}20  & \hpm 0.4009 & -6.78e{-}21 &     -0.7482 &     -0.4133 \\
\hpm 0.6106 &     -0.0550 & \hpm 1.7157 & \hpm 0.1649 &     -1.2150 &     -0.6106
  \end{pmatrix}.
\end{displaymath}
The corresponding $R$ is equal to one from Example~\ref{ex6.3}.

The optimal scaling of rows of $S$ is obtained by a block diagonal
matrix $D_c$,
\begin{displaymath}
  \widecheck{D}_c \approx \begin{pmatrix}
0.8634 & 1.1876 &        &             &        & \\
       & 1.1582 &        &             &        & \\
       &        & 1.0913 &     -0.1685 &        & \\
       &        &        & \hpm 0.9164 &        & \\
       &        &        &             & 1.2107 & 0.2583 \\
       &        &        &             &        & 0.8260
  \end{pmatrix}.
\end{displaymath}
The scaled matrix
\begin{displaymath}
  S \widecheck{D}_c^{-1} \approx \begin{pmatrix}
\hpm 1.2590 &     -0.7775 & \hpm 0.5137 & \hpm 0.0944 &     -0.4470 & \hpm 0.1398 \\
    -0.6117 & \hpm 0.2294 &     -0.5438 & \hpm 1.4087 &     -1.1347 & \hpm 1.6706 \\
    -0.2122 & \hpm 0.4769 & \hpm 0.0457 &     -0.9750 & \hpm 0.3037 &     -0.2509 \\
    -0.6887 & \hpm 1.2196 & \hpm 0.0000 & \hpm 0.0000 &     -0.4470 & \hpm 0.1398 \\
\hpm 0.4356 &     -0.4467 & \hpm 0.3674 & \hpm 0.0675 &     -0.6180 &     -0.3072 \\
\hpm 0.7071 &     -0.7725 & \hpm 1.5722 & \hpm 0.4689 &     -1.0036 &     -0.4254
  \end{pmatrix}
\end{displaymath}
has a somewhat higher condition number than the original $S$.  Indeed,
we have
\begin{displaymath}
  \kappa_2(S) = 18.0149, \qquad
  \kappa_2(S\widecheck{D}_c^{-1}) = 21.9625, \qquad
  \delta = 1.7800, \qquad
  \mu = 1.2168,
\end{displaymath}
with the row-norms equal to $1.7800$, and $\alpha_C = 10.1756$.
\label{ex6.4}
\end{example}

%
%
\section{Concluding remarks}
\label{sec:7}
%
%

The results of this paper may help to refine the relative perturbation
results for the eigendecomposition of skew-symmetric matrices computed
by the algorithm derived by Pietzsch in his PhD
thesis~\cite{Pietzsch-93}.

%
%


\begin{thebibliography}{10}
\expandafter\ifx\csname url\endcsname\relax
  \def\url#1{\texttt{#1}}\fi
\expandafter\ifx\csname urlprefix\endcsname\relax\def\urlprefix{URL }\fi

\bibitem{Benner-Byers-Fassbender-Mehrmann-Watkins-2000}
P.~Benner, R.~Byers, H.~Fassbender, V.~Mehrmann, D.~Watkins, Cholesky-like
  factorizations of skew-symmetric matrices, Electr. Trans. on Numer. Anal. 11
  (2000) 85--93.

\bibitem{Bunch-82}
J.~R. Bunch, A note on the stable decomposition of skew-symmetric matrices,
  Math. Comp. 38~(158) (1982) 475--479.

\bibitem{BunseGerstner-86}
A.~Bunse-Gerstner, Matrix factorizations for symplectic {QR}-like methods,
  Linear Algebra Appl. 83 (1986) 49--77.

\bibitem{Chang-98}
X.-W. Chang, On the sensitivity of the {SR} decomposition, Linear Algebra Appl.
  282~(1--3) (1998) 297--310.

\bibitem{Demmel-Kagstroem-87}
J.~W. Demmel, B.~K{\aa}gstr{\"{o}}m, Computing stable eigendecompositions of
  matrix pencils, Linear Algebra Appl. 88--89 (1987) 139--186.

\bibitem{Elsner-79}
L.~Elsner, On some algebraic problems in connection with general eigenvalue
  algorithms, Linear Algebra Appl. 26 (1979) 123--138.

\bibitem{Fassbender-Razloznik-2016}
H.~Fa{\ss}bender, M.~Razlo{\v{z}}n{\'{\i}}k, On the conditioning of factors in
  the {SR} decomposition, Linear Algebra Appl. 505 (2016) 224--244.

\bibitem{Golub-VanLoan-96}
G.~H. Golub, C.~F. {Van Loan}, Matrix Computations, 3rd ed., Johns Hopkins
  University Press, Baltimore, 1996.

\bibitem{HighamN-2002}
N.~J. Higham, Accuracy and Stability of Numerical Algorithms, 2nd ed., SIAM,
  Philadelphia, 2002.

\bibitem{Mehrmann-79}
V.~Mehrmann, Der {SR}--{A}lgorithmus zur {B}erechnung der {E}igenwerte einer
  {M}atrix, diploma Thesis (in German) (1979).

\bibitem{Pietzsch-93}
E.~Pietzsch, Genaue {E}igenwertberechnung nichtsingul{\"{a}}rer
  schiefsymme\-tri\-scher {M}atrizen, Ph.D. thesis,
  Fern{U}niversit{\"{a}}t--{G}esamthochschule, Hagen (1993).

\bibitem{SingerSanja-SingerSasa-2003}
S.~Singer, S.~Singer, Rounding-error and perturbation bounds for the symplectic
  {QR} factorization, Linear Algebra Appl. 358~(1--3) (2003) 255--279.

\bibitem{vanderSluis-69}
A.~van~der Sluis, Condition numbers and equilibration of matrices, Numer. Math.
  14~(1) (1969) 14--23.

\bibitem{Watkins-Elsner-91}
D.~S. Watkins, L.~Elsner, Convergence of algorithms of decomposition type for
  the eigenvalue problem, Linear Algebra Appl. 143 (1991) 19--47.

\end{thebibliography}

\end{document}